\documentclass[11pt,letterpaper]{amsart}
\usepackage{amssymb,
                       amsmath,
                               txfonts,
                                      mathrsfs,
                                              titletoc,
                                                    appendix,
                                                    relsize}
\usepackage{graphicx}
\usepackage{xcolor}
\usepackage{subcaption}

\usepackage[alphabetic]{amsrefs}

 \newtheorem{thm}{Theorem}[section]
 \newtheorem{cor}[thm]{Corollary}
   
  \newtheorem{cjt*}{Conjecture}
 \newtheorem{lem}[thm]{Lemma}
 \newtheorem{prop}[thm]{Proposition}
 \newtheorem{defn}[thm]{Definition}
 \newtheorem{rem}[thm]{Remark}
 \numberwithin{equation}{section}
\newtheorem{lem*}{Lemma}
\newtheorem{cor*}{Corollary}

\newcommand{\parallelsum}{\mathbin{\!/\mkern-5mu/\!}}

\newenvironment{pfT}{\medskip \noindent
{\it Proof of Theorem \ref{T}.}}{\hfill $\square$\par
} 

\newenvironment{pfp42}{\medskip \noindent
{\it Sketch Proof of Proposition \ref{p42}.}}{\hfill $\square$\par
}

\newenvironment{pf}{\medskip \noindent
{\bf Proof of Theorem \ref{mainthm1}.}}{\hfill \rule{.5em}{1em}
\\}

\newenvironment{rmk}{\mbox{ }\\{\bf  Remark}\mbox{ }}{
\hfill $\Box$\mbox{}\bigskip}

\def\p#1{\partial #1}

\def\la{\lambda}

\def\th{\theta}

\def\R{\Bbb{R}}
\def\C{\Bbb{C}}

\begin{document}

\title[Obstruction by minimal product structure]{Cones over minimal products\\ cannot be calibrated by smooth calibrations}%
\author{Yongsheng Zhang}
\address{Academy for Multidisciplinary Studies, Capital Normal University, Beijing 100048, P. R. China}
\email{yongsheng.chang@gmail.com}
\date{\today}
\keywords{Calibration, minimal product,  duality obstruction} 
 \subjclass{Primary~53C38}
 %

\begin{abstract}
We extend a key result in \cite{Z26},
by establishing the obstruction that
the minimal product structure (for minimal submanifolds or stationary currents in spheres) 
automatically makes all cones over (non-trivial) minimal products
fail to be calibrated by any global defined smooth calibration in Euclidean spaces.
\end{abstract}
\maketitle
\section{Introduction}\label{S1} 
    The class of area-minimizing cones naturally arising as tangent cones of area-minimizing currents (see \cite{FF})
    or tangent cones at infinity for certain situations
    is an important object which 
    has been successively studied for decades  
                              in several famous questions in differential geometry and geometric measure theory, e.g. see \cite{fle, de2, A, JS, BdGG, BL,  HL, LS,  HS, FK, Ch, Law0, Law, NS}.
                              
                              Until today, the most effective way to find area-minimizing cones is the theory of calibrations, e.g. special Lagrangian cones, (co)associative cones in  \cite{HL}.
                             Beyond calibrations with constant coefficients,
                             Lawlor's curvature criterion \cite{Law} essentially constructs generalized calibrations and  can provide us with  regular area-minimizing cones
                             (and non-orientable  area-minimizing cones 
                             mod 2).
                             
     Due to the dimension reason, 
     every area-minimizing hypercones cannot be calibrated by a globally defined smooth calibration.
     However for high codimensional situation the question was open and wondered by some experts 
     \begin{quote}
     $(\star)$ whether an area-minimizing cone of high codimension always supports some globally defined smooth calibration.
          \end{quote}
     Very recently in \cite {Z26} the author discovered  counterexamples which cannot support any globally defined smooth (or continuous) calibration.
  
     Such examples come from a combination of \cite{Z25} and \cite{Z26}.
     On the one hand, 
     using minimal product structure and Lawlor's criterion we gain  the following general configuration result.
     
          \begin{thm}[\cite{Z25}]\label{Z25main}
                                Given (oriented) embedded closed minimal submanifolds $\{L_i\}$ respectively in Euclidean spheres with $i=1,2,\cdots, n$,
                                every cone over the minimal product $L$ of sufficiently many copies among these $\{L_i\}$ is area-minimizing.
                                \end{thm}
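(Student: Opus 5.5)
The plan is to apply Lawlor's curvature criterion \cite{Law} to the cone over the minimal product and to show that its hypotheses become easier to satisfy as the number of factors grows. Recall the minimal product construction. If $L_i\subset S^{n_i}$ is minimal of dimension $k_i$, then
\[
L=\sqrt{\tfrac{k_1}{k}}L_1\times\cdots\times\sqrt{\tfrac{k_m}{k}}L_m\subset S^{n_1+\cdots+n_m+m-1},\qquad k=\sum k_i,
\]
is minimal in the sphere. Allowing $k_i=0$ corresponds to points, and multiplicities are allowed. The cone $C(L)$ then has dimension $k+m$.

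Lawlor's criterion asks for two things.
\begin{enumerate}
\item For each unit normal $\nu$ at a point of the link, one needs a bound on the second fundamental form of $L$ in the sphere in direction $\nu$. This bound gives the vanishing angle $\theta(k+m,\alpha)$, defined through the ODE of \cite{Law} with curvature parameter $\alpha$.
\item One needs this angle to be smaller than the normal radius, i.e.\ the angle within which a Lipschitz retraction/projection onto the cone is nonsingular. A convenient sufficient form is to show that $\theta<\pi/4$, or more precisely below the distance to the focal and cut locus of the normal exponential map.
\end{enumerate}

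\textbf{Step 1: curvature of the product.} Scaling $L_i$ by $r_i=\sqrt{k_i/k}$ multiplies its second fundamental form by $1/r_i$. The extra normal directions, coming from the radial mixing of the spheres, contribute terms of order $\sqrt{k/k_i}$ times bounded quantities. As a result, for a unit normal $\nu$ the quantity $\sum_j |\langle A(e_j,e_j),\nu\rangle|$, or rather Lawlor's quantity $\sup\operatorname{tr}$ of the relevant eigenvalues, is bounded by $C\sqrt{k}$. Here $C$ depends only on the finitely many original $L_i$. The ratio (curvature)$/\dim$ therefore behaves like $C/\sqrt{k+m}\to0$.

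\textbf{Step 2: vanishing angle.} Lawlor's estimate says that if the normalized curvature $\alpha$ is $O(\sqrt{\text{dim}})$, the vanishing angle tends to $0$ as the dimension tends to $\infty$. This holds because the comparison function $(\cos\theta)^{\dim}$-type area decay dominates. So for sufficiently many copies, $\theta(k+m,\alpha)<\varepsilon$ for any prescribed $\varepsilon$.

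\textbf{Step 3: normal radius.} I must show that the normal radius of $L$ in the sphere is bounded below independently of how many copies are taken. This is the main obstacle, since the product grows and scaling shrinks factors by $\sqrt{k_i/k}$, which tends to shrink injectivity. My first approach is to realize the nearest-point projection explicitly in product coordinates. A point near the cone decomposes into components in each $\mathbb R^{n_i+1}$. I would project each component radially and then onto the cone over $L_i$; each such projection is well defined within the fixed normal radius of the corresponding $C(L_i)$. I would then check that the resulting map is a retraction whose singular set lies outside the angle $\varepsilon$ wedge; this is easiest for factors with $k_i=0$. Combining Steps 2 and 3 with Lawlor's theorem that the construction yields a calibration-type comparison gives area-minimality of $C(L)$, for every sufficiently large choice of multiplicities.
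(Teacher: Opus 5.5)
Your overall route, Lawlor's curvature criterion plus an asymptotic analysis as copies are added, is the one the paper attributes to \cite{Z25}. However, Step 3 is false, so the argument cannot close the way you set it up (vanishing angle $<\varepsilon$, normal radius bounded below). Write $L=L_1\dot\times\cdots\dot\times L_n$ with $k=\sum k_i$ and $p=(\lambda_1x_1,\ldots,\lambda_nx_n)$. Let $\eta$ be the unit vector in the plane of $p$ and $\mathcal P_i(x_i)$ that is orthogonal to $p$ and points away from $\mathcal P_i(x_i)$; it is normal to $L$ in the sphere. The $i$-th component of $\cos t\,p+\sin t\,\eta$ is $(\lambda_i\cos t-\sqrt{1-\lambda_i^2}\,\sin t)\,x_i$, which vanishes at $t=\arcsin\lambda_i=\arcsin\sqrt{k_i/k}$, and the resulting point does not depend on $x_i$. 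So the normal wedges through all points $(\lambda_1x_1,\ldots,\lambda_ix_i',\ldots,\lambda_nx_n)$, $x_i'\in L_i$, meet there. Equivalently, $A^\eta$ is $\sqrt{(k-k_i)/k_i}$ times the identity on $T_{x_i}L_i$ (up to sign convention), so this is a focal point. Hence the normal radius is at most $\min_i\arcsin\sqrt{k_i/k}$, which tends to $0$ like $k^{-1/2}$ as copies are added.

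Your componentwise projection does not get around this. Projecting each component onto $C(L_i)$ lands in the Cartesian product $C(L_1)\times\cdots\times C(L_n)$, of dimension $k+n$, not on $C(L)$; that is the number you wrote for $\dim C(L)$, but $\dim C(L)=k+1$. Moreover, Lawlor's theorem concerns his specific retraction along normal wedges, whose domain is controlled by the normal radius of $L$ in the sphere, so you cannot substitute another map. Also, $k_i=0$ is not admissible, since it gives $\lambda_i=0$.

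What is actually needed is a comparison of rates at two different scales. Your Step 1 does give $|A^\nu|^2=O(k)$ and $\lambda_{\max}(A^\nu)=O(\sqrt k)$ uniformly in unit normals $\nu$. Consequently, for $\theta=O(1/k)$ the area factor $F=\cos^k\theta\,\det(I-\tan\theta\,A^\nu)$ is $1-O(1/k)$. Lawlor's equation $dr/d\theta=r\sqrt{r^{2(k+1)}F^2-1}$ is then a small perturbation of the case $F\equiv1$, which integrates to $r^{k+1}=\sec((k+1)\theta)$. So the vanishing angle is about $\pi/(2(k+1))$, i.e.\ $O(1/k)$, which is much more than the mere $o(1)$ your Step 2 provides.

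In the other direction, you must prove that the normal radius is at least $c\min_i\lambda_i\ge c'/\sqrt k$. Suppose $\tan\theta\le\epsilon\min_i\lambda_i$. Each component of a wedge point $\cos\theta\,p+\sin\theta\,\nu$ has the form $a_ix_i+b_in_i$ with $a_i\ge(1-\epsilon)\lambda_i\cos\theta>0$ and $|b_i|/a_i\le\epsilon/(1-\epsilon)$. For $\epsilon$ small, depending only on the normal radii of the finitely many $L_i$ in their own spheres, the direction of each component determines $x_i$ and $n_i$. Then $\theta$ and $\nu$ are recovered from $\cos\theta=\sum_i\lambda_ia_i$.

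Since $1/k\ll1/\sqrt k$, Lawlor's criterion holds once enough copies are taken. These two quantitative estimates are the missing content, and the second replaces your uniform lower bound, which does not exist.
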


                                 On the other hand, utilizing a classical result \cite{JS} by J. Simons we show 
                               that whenever some $L_i$ is a minimal hypersurface of positive dimension the cone over the minimal product
                               supports no globally defined  calibration.

     \begin{thm}[\cite{Z26}]\label{T2}
                                     Let $L_i$ be a closed embedded  minimal submanifold in $\mathbb S^{N_i}$ for $i=1,\cdots, n$.
                                     Suppose that $L_1$ is of codimension one. 
                                     Then the cone $C(L_1\dot\times\cdots\dot \times L_n)$ over their minimal product $L_1\dot\times\cdots\dot\times L_n$ supports no globally defined 
                                     smooth  calibration.
                                     \end{thm}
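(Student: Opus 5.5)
The plan is to reduce to a constant-coefficient calibration by blowing up at the origin, then extract from it a constant vector field that meets the cone $C(L_1)$ at a constant angle. A Jacobi-field identity on the minimal hypersurface $C(L_1)$ will then force $L_1$ to be totally geodesic. Throughout, "non-trivial" means $L_1$ is not a great subsphere; in that case $C(L_1)$ is a hyperplane and the statement fails, so this is implicitly excluded.

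\emph{Step 1: blow-up.} Let $k_i=\dim L_i$, $k=\sum k_i$, and $a_i=\sqrt{k_i/k}$. The minimal product consists of the points $(a_1x_1,\dots,a_nx_n)$ with $x_i\in L_i$. Suppose $\phi$ is a smooth calibration on $\mathbb R^{N}$ calibrating $C=C(L_1\dot\times\cdots\dot\times L_n)$. The tangent plane of $C$ at $x$ equals that at $tx$ for every $t>0$, so $\phi(tx)$ restricted to $T_xC$ equals $1$ for all $t>0$. Letting $t\to0$, the constant form $\phi_0:=\phi(0)$, which has comass $\le1$, calibrates every tangent plane of $C$. Concretely, for all $x_i\in L_i$,
\[
\phi_0\bigl((\textstyle\sum_i a_ix_i)\wedge\xi_1(x_1)\wedge\Xi'\bigr)=1,\qquad \Xi'=\xi_2(x_2)\wedge\cdots\wedge\xi_n(x_n),
\]
where $\xi_i(x_i)$ is the unit tangent $k_i$-vector of $L_i$ at $x_i$.

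\emph{Step 2: slicing to the first factor.} Fix $x_2,\dots,x_n$ once and for all, so that $\Xi'$ and the unit vector $y'=b^{-1}\sum_{i\ge2}a_ix_i$ are fixed, with $b=\sqrt{1-a_1^2}$. Only $x_1\in L_1$ varies. Define the $N_1$-form $\alpha$ on $\mathbb R^{N_1+1}$ by $\alpha(\eta)=\phi_0(\eta\wedge\Xi')$; it has comass $\le1$ because $\Xi'$ is a unit simple vector orthogonal to $\mathbb R^{N_1+1}$. The map $s\mapsto\phi_0\bigl((\cos s\,x_1+\sin s\,y')\wedge\xi_1\wedge\Xi'\bigr)$ is a linear form of the shape $A\cos s+B\sin s$ with $A^2+B^2\le1$, and it attains the value $1$ at $(\cos s,\sin s)=(a_1,b)$. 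Hence $A=a_1$, that is,
\[
\alpha(x_1\wedge\xi_1(x_1))=a_1\quad\text{for all }x_1\in L_1.
\]
Since $L_1$ is a hypersurface in $\mathbb S^{N_1}$, the $N_1$-plane $x_1\wedge\xi_1(x_1)$ is the tangent plane of $C(L_1)$, namely $\nu(x_1)^\perp$. Writing $\alpha=\ast u$ for a fixed vector $u$ with $|u|\le1$, we obtain $\langle u,\nu\rangle\equiv a_1\neq0$ along $L_1$, and hence along all of $C(L_1)$. This is the point where the codimension-one hypothesis is essential: it makes the $N_1$-form on $\mathbb R^{N_1+1}$ Hodge dual to a single vector.

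\emph{Step 3: the Simons-type identity, the main obstacle.} For a minimal hypersurface, the normal component of a constant vector field, $f=\langle u,\nu\rangle$, satisfies the Jacobi equation
\[
\Delta f+|A|^2 f=0.
\]
This is the classical computation in \cite{JS}: translations generate Jacobi fields. I would apply it on the regular part of $C(L_1)$, or equivalently on $L_1$ after separating the radial variable. Since $f\equiv a_1$ is a nonzero constant, the identity gives $|A|^2\equiv0$. Thus $C(L_1)$ is a hyperplane and $L_1$ is totally geodesic, which is the desired contradiction. The delicate points I expect are, first, checking carefully the comass bound for the contracted form $\alpha$ and the orientation conventions that make the constant exactly $a_1>0$; and second, justifying the Jacobi identity on the cone, away from the vertex, for possibly singular stationary $L_1$ if one wants the current version. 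The same argument also handles continuous calibrations, since only $\phi(0)$ and the limit $t\to0$ were used.
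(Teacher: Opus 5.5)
Your Steps 1--3 are correct, and they follow the route the paper attributes to \cite{Z26} for this theorem. That route passes to the constant form $\phi_o$, contracts with the tangent vectors of the other factors, and uses the first-order condition at a calibrated plane; your two-dimensional Cauchy--Schwarz argument is a clean substitute for Lemma \ref{L2}. It then Hodge-dualizes in codimension one and concludes with Simons' Jacobi identity.

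\textbf{The gap.} As written, the argument proves the theorem only when $L_1$ is not totally geodesic, and your reason for discarding the remaining case is wrong. For $n\ge 2$ with positive-dimensional factors, the cone in question is $C(L_1\dot\times\cdots\dot\times L_n)$, not $C(L_1)$, and it is not flat when $L_1$ is a great hypersphere. Take $N_1=N_2=2$ and $L_1,L_2$ great circles: you get the cone over the Clifford torus, a non-planar minimal hypercone in $\mathbb R^4$. It admits no constant-coefficient calibration, because a $3$-form of comass at most one on $\mathbb R^4$ is $\ast u$ with $|u|\le 1$, and it takes the value $1$ only on the oriented hyperplane $u^\perp$. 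In this example your Steps 2--3 yield nothing: the normal of $L_1$ is constant, so $\langle u,\nu\rangle\equiv a_1$ is easily satisfied. Since every closed embedded minimal curve in $\mathbb S^2$ is a great circle, your proof is empty for the whole case $N_1=2$. Your claim that the statement fails is only right for $n=1$, where the cone is $C(L_1)$ itself.

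\textbf{The fix.} The missing case is closed by the other half of your own Step 2. Cauchy--Schwarz gives $(A,B)=(a_1,b)$, not just $A=a_1$. So the constant-coefficient $k_1$-form $\beta(\eta):=\phi_0(y'\wedge\eta\wedge\Xi')$ on $\mathbb R^{N_1+1}$ satisfies $\beta(\xi_1(x_1))=b$ for every $x_1\in L_1$; that is, $\beta$ restricts to $b\,\mathrm{vol}_{L_1}$ on $L_1$. The form $\beta$ is closed, hence exact, on $\mathbb R^{N_1+1}$, and $L_1$ is a closed oriented submanifold. Therefore $0=\int_{L_1}\beta=b\,\mathrm{vol}(L_1)>0$, since $b>0$ when some other factor has positive dimension. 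This is a contradiction.

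This is exactly the paper's proof of Theorem \ref{T}, which uses $\tilde x_2$ in place of $y'$ and obtains $\lambda_2$ in place of $b$. It works for every $L_1$, totally geodesic or not, in any codimension. It therefore makes both the codimension-one hypothesis and your Simons step unnecessary.
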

     
        Based on these two results,
        many counterexamples to question $(\star)$ can be found 
        and consequently in \cite{Z26} we extend the previous work  \cite{Z12} on detecting duality obstruction of calibrations in the smooth category.
        
        {\ }
        
        In this paper, we shall generalize Theorem \ref{T2} to its broadest version.
         
         \begin{thm}\label{T}
                                     Let $L_i$ be a closed embedded  minimal submanifold of positive dimension in $\mathbb S^{N_i}$ for $i=1,\cdots, n$ with $n\geq 2$.
                                     Then the cone $C(L_1\dot\times\cdots\dot\times L_n)$ over their minimal product  supports no globally defined smooth calibration.
                                     \end{thm}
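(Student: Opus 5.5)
The plan is to reduce to a constant-coefficient calibration at the vertex, and then exploit the splitting of the tangent planes of $C=C(L_1\dot\times\cdots\dot\times L_n)$ to reach a contradiction. I expect to use the same mechanism that proves Theorem~\ref{T2}, but with the codimension-one hypothesis on $L_1$ replaced by an argument that uses only the product structure. This is a plan, not a checked proof.

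\emph{Step 1 (reduction to the origin).} Suppose $\phi$ is a smooth closed $(k+1)$-form on $\mathbb R^{N}$, $N=\sum_i(N_i+1)$, with comass $\le 1$ everywhere and $\phi|_{\vec T_pC}=\mathrm{vol}$ at regular points, where $k=\sum_i k_i$ and $k_i=\dim L_i>0$. The tangent plane $\vec T_pC$ is constant along each ray of $C$. Letting $p\to 0$ along a ray, continuity of $\phi$ shows that the constant form $\phi_0:=\phi(0)$, which has comass $\le1$, calibrates every tangent plane of $C$. Hence it suffices to show that no constant-coefficient calibration calibrates $C$.

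\emph{Step 2 (the calibrated planes).} Write a point of $L$ as $p=(a_1x_1,\dots,a_nx_n)$ with $a_i=\sqrt{k_i/k}$ and $x_i\in L_i$. The tangent plane is
$$\vec T_pC=p\wedge \vec T_{x_1}L_1\wedge\cdots\wedge \vec T_{x_n}L_n .$$
Because $\phi_0$ has comass $\le1$ and attains $1$ on these planes, the first-order conditions for calibrated planes apply: for every $v\perp \vec T_pC$ and every $e_j$ in the plane, $\phi_0$ vanishes on the plane obtained by replacing $e_j$ by $v$. I will apply this with the special normal vectors available from the product structure. A natural choice is
$$v=a_2x_1-a_1x_2\quad(\text{inside }\mathbb R^{N_1+1}\oplus\mathbb R^{N_2+1}),$$
which is normal to $C$ at $p$ and is tangent to the "rotation" $(\cos t\,x_1,\sin t\,x_2)$ that moves off the product. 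I will also use the normal directions of each $L_i$ in $\mathbb S^{N_i}$.

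\emph{Step 3 (key identity).} $\phi_0$ is constant and closed, so $\phi_0=d\psi$ with $\psi=\tfrac{1}{k+1}\iota_X\phi_0$, where $X$ is the position field. Fix $x_2,\dots,x_n$ and let $x_1$ range over $L_1$. Integrating $\phi_0$ over the $(k_1+1)$-dimensional cone directions, and using Step~2 together with Stokes' theorem on $L_1$, I aim for an identity of the form: the $k_1$-form
$$\omega:=\iota_{x_1}\phi_0\big|_{\,\cdot\,\wedge \vec T_{x_2}L_2\wedge\cdots}$$
restricted to $L_1$ must be the volume form of $L_1$ and also exact on $L_1$. Here "exact" means it is the restriction of a constant form on $\mathbb R^{N_1+1}$ contracted with the position field, hence $d$ of something. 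The mismatch between the weights $a_1$ and $a_2$ coming from the vector $v$ should produce a factor $(a_1^2-1)$, or one of similar type, in front of $\mathrm{vol}_{L_1}$. Since $n\ge2$ and $k_2>0$, this factor is nonzero. Integrating over the closed manifold $L_1$ would then give $0=c\cdot \mathrm{vol}(L_1)$ with $c\neq0$, which is a contradiction.

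\emph{Main obstacle.} The hard part is Step~3. I must extract a nonvanishing multiple of $\mathrm{vol}_{L_1}$ from the first-order conditions using only that $L_2,\dots,L_n$ have positive dimension, rather than that $L_1$ is a hypersurface. In Theorem~\ref{T2} the codimension-one hypothesis lets one invoke Simons' result directly. Here I would instead try the rotation normal $v$ from Step~2, averaged over $x_2\in L_2$, so that the cross terms integrate out by minimality of $L_2$ (which gives $\int_{L_2}x_2=0$ in the relevant contractions). If that averaging does not kill the cross terms, the fallback is to reduce the general case to the situation of Theorem~\ref{T2}. Concretely, I would check whether the second variation of $\phi_0$ along the rotation forces the constant-calibrated cone $C(L_1)\times\mathbb R^{k_2+\cdots}$-type slices to be calibrated, and then argue as in \cite{Z26}.
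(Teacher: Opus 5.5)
Your Steps 1 and 2 are sound, and the normal vector $v=a_2\tilde x_1-a_1\tilde x_2$ is the right kind of input. But Step 3, which you correctly flag as the heart of the matter, fails as formulated.

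The form $\omega$ you propose is contracted with the \emph{varying} position $x_1$. On $\R^{N_1+1}$ it is $\iota_X\alpha$, where $X$ is the position field and $\alpha=\mathcal P_1^*\big((\tilde\xi_2\wedge\cdots\wedge\tilde\xi_n)\lrcorner\phi_0\big)$ is a constant $(k_1+1)$-form. Then $d(\iota_X\alpha)=(k_1+1)\alpha\neq 0$. So $\iota_X\alpha$ is a primitive of a constant form, not an exact form, and $\int_{L_1}\iota_X\alpha=(k_1+1)\int_{C(L_1)\cap B_1}\alpha$ has no reason to vanish. In fact your own first-order condition, applied with the normal $\tilde x_1-a_1p$, gives $\omega|_{L_1}=\pm a_1\,\mathrm{vol}_{L_1}$. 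Both sides of Stokes then equal $\pm a_1\,\mathrm{vol}(L_1)$, so no factor like $(a_1^2-1)$ and no contradiction can come out of $\omega$. This is essentially the form behind Theorem~\ref{T2}, where the codimension-one hypothesis (via Simons) was needed precisely because this form alone is consistent. Averaging over $L_2$ does not address the issue, and the fallback to Theorem~\ref{T2} is unavailable without codimension one.

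The missing idea is to contract with the \emph{fixed} vector $\tilde x_2$ instead of $x_1$. With $x_2,\dots,x_n$ fixed, set $\underline\Psi=\mathcal P_1^*\big((\tilde x_2\wedge\tilde\xi_2\wedge\cdots\wedge\tilde\xi_n)\lrcorner\phi_0\big)$. This is a constant-coefficient $k_1$-form on $\R^{N_1+1}$, hence closed and exact. So $\int_{L_1}\underline\Psi=0$, because $L_1$ is a closed manifold of dimension $k_1>0$.

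On the other hand, $w=\tilde x_2-a_2p$ is orthogonal to $p$ and to every $\tilde\xi$, hence normal to the cone at $p$. The first-order condition you already use in Step 2 (equivalently Lemma~\ref{L2}) therefore kills $\phi_0(\tilde\xi(x_1)\wedge w\wedge\tilde\xi_2\wedge\cdots\wedge\tilde\xi_n)$. Up to a sign fixed by orientation conventions, this gives $\underline\Psi(\xi(x_1))=a_2\,\phi_0(\tilde\xi(x_1)\wedge p\wedge\tilde\xi_2\wedge\cdots\wedge\tilde\xi_n)=\pm a_2$ for every $x_1\in L_1$, with the sign independent of $x_1$. If you prefer your vectors, you need $a_j\tilde x_2-a_2\tilde x_j$ for all $j\neq 2$; the single $v$ suffices only when $n=2$.

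Therefore $0=\int_{L_1}\underline\Psi=\pm a_2\,\mathrm{vol}(L_1)\neq 0$, using $k_2>0$. This is exactly the paper's argument, and it needs neither Simons' result nor any averaging.
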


        Therefore,
        the minimal product structure itself forms an obstruction
        to question $(\star)$
        and  in conjunction with Theorem \ref{Z25main} 
   more counterexamples can be obtained.

       {\ }
        
         The organization is the following.
                         We briefly recall some basic knowledge of calibrations
                          and the minimal product structure
                           in \S \ref{S2}.
                           A proof of Theorem \ref{T}
                          will be given in \S \ref{S3} 
                     and as a result the work of \cite{Z26}
                     can be further deepened based on Theorems \ref{Z25main} and \ref{addcor}.
                     As the Cartesian product of certain minimizing cones can be shown minimizing,
                     in \S \ref{S4} we extend Theorem \ref{T} involving the Cartesian product.
                     Finally, in \S \ref{S5}
                        we will extend Theorem \ref{T} to stationary current version.

              {\ }

                     \section{Preliminaries}\label{S2}

             
            In this section we first review some definitions and properties of calibrations,
            and then recall the minimal product structure for minimal submanifolds in Euclidean spheres.

            \subsection{Calibrations.}\label{cal}
            There is a norm called comass for a form
            which can be defined pointwise as follow. 
             \begin{defn}\label{comassdef}
            Let $\phi$ be a smooth $m$-form on a Riemannian manifold $(X,g)$.
            At a point $x\in X$ the comass of $\phi_x$ is defined to be
                     \begin{equation*}
                     \|\phi\|_{x,g}^*=\max \ \{\phi_x( \overrightarrow V_x) : \overrightarrow V_x \ \text{is a unit simple m-vector at x}\}.
                     \end{equation*}
             Here {``simple"} means $\overrightarrow V_x=e_1\wedge e_2\cdots \wedge e_m$
             for some $e_i\in T_xX$.
             \end{defn}
       
          \begin{defn}\label{calibration}
           A smooth form $\phi$ on $(X,g)$ is called a {calibration} if 
           $\sup_{X}\|\phi\|_{g}^*= 1$ 
           and
           $d\phi=0.$
           Such a triple $(X,\phi,g)$ is called a {calibrated manifold}.
          \end{defn}

                   The dual complex $(\mathscr E'_*(X),d)$ of smooth forms defines currents.
               Elements of $\mathscr E'_m(X)$ are $m$-dimensional {\it de Rham} 
                                                                                   {currents} 
                                                                                   (with compact support)
               and $d$ is the adjoint of exterior differentiation.
               There is a  dual norm called mass for currents
               given by
$\mathrm{\mathbf{M}}(T)=\sup \{T(\psi):\psi\ smooth\ m\text{-}form\ with\  \sup_{x\in X}\|\psi\|_{x, g}^*\leq 1\}.$

 When $T$ has finite mass, 
          there exist a unique {Radon} measure $\|T\|$ satisfying
                 $ \int_X f\cdot d\|T\|=\sup\{T(\psi): \|\psi\|_{x,g}^*\leq f(x) \}$
          for any nonnegative continuous function $f$ on $X$
          and
          a $\|T\|$ measurable tangent $m$-vector field $\overrightarrow T$ a.e. with 
          vectors $\overrightarrow T_x \in \Lambda^m T_xX$ of unit length in the dual norm of the comass norm, with
     $
                T(\psi)= \int_X\psi_x(\overrightarrow {T_x})\ d \|T\|(x)$
for any smooth $m$-form $\psi$
           \text{or\ briefly}
           $T = \overrightarrow T\cdot \|T\|\ a.e.\ \|T\|.$

Let $\mathbb M_m(X)=\{T\in\mathscr E'_m(X): \mathrm{\mathbf{M}}(T)<\infty\}$.
                        The milestone  paper \cite{FF}  defines 
                        normal currents
                         $N_m(X)=\{T\in\mathbb M_m(X): dT\in\mathbb M_{m-1}(X)\}$
     and the chain complex $(N_*(X),d)$ 
                 and moreover sets up
                 the isomorphisms
                            \begin{equation}\label{iso}
                            H_*(\mathscr E'_*(X))\cong H_*(X;\mathbb R)\cong H_*(N_*(X)).
                            \end{equation}
                            
                                \begin{defn}\label{calibratable}
                   Let $\phi$ be a calibration on $(X,g)$.
                   We say that a current $T$ of local finite mass is {calibrated} by $\phi$, if 
                        $\phi_x(\overrightarrow T_x)=1\ a.a.\ x\in X\ \text{for}\ \|T\|.$
                     In this case we say $T$ {supports} calibration $\phi$.
          \end{defn}

        The following Fundamental Theorem in Calibrated Geometry 
is powerful in proving mass-minimality.
 \begin{thm}[\cite{HL}]\label{hl}
                                           If $T$ is a calibrated current 
                                with compact support
                                           in a calibrated manifold $(X,\phi,g)$ and
                                           $T'$ is any compactly supported current homologous to $T$(i.e., $T-T'$ is a boundary and in particular $dT=dT'$),
                                           then
                                                          \begin{equation*}
                                                          \mathrm{\mathbf{M}}(T)\leq  \mathrm{\mathbf{M}}(T')
                                                          \end{equation*}
                                          with equality if and only if $T'$ is calibrated as well.
                               \end{thm}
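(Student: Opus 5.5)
The plan is to compare the two masses by pairing both currents with the calibration $\phi$ and using closedness of $\phi$. By hypothesis $T-T'=dS$ for some compactly supported current $S$ of dimension $m+1$, where $d$ is the adjoint of exterior differentiation. Since $\phi$ is smooth and $S$ has compact support, $S(d\phi)$ is defined. Because $d\phi=0$,
\[
T(\phi)-T'(\phi)=(dS)(\phi)=S(d\phi)=0,
\]
so $T(\phi)=T'(\phi)$. The rest of the argument evaluates each side of this identity.

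\emph{Step 1: the value on $T$ is its mass.} Since $T$ is calibrated, $T=\overrightarrow T\cdot\|T\|$ with $\phi_x(\overrightarrow T_x)=1$ for $\|T\|$-a.e.\ $x$. Hence
\[
T(\phi)=\int_X\phi_x(\overrightarrow T_x)\,d\|T\|(x)=\|T\|(X)=\mathrm{\mathbf{M}}(T),
\]
which is finite because $T$ has compact support and locally finite mass.

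\emph{Step 2: the value on $T'$ is at most its mass.} If $\mathrm{\mathbf{M}}(T')=\infty$ there is nothing to prove, so assume it is finite and write $T'=\overrightarrow{T'}\cdot\|T'\|$. The unit $m$-vectors $\overrightarrow{T'}_x$ have norm $1$ in the dual of the comass norm, and they need not be simple. By the definition of the dual norm, $\phi_x(\xi)\le\|\phi\|^*_{x,g}\,\|\xi\|$ for every $m$-vector $\xi$, not only simple ones. Since $\|\phi\|^*_{x,g}\le 1$ everywhere, this gives
\[
T'(\phi)=\int_X\phi_x(\overrightarrow{T'}_x)\,d\|T'\|(x)\le\|T'\|(X)=\mathrm{\mathbf{M}}(T').
\]
Combining Steps 1 and 2 with $T(\phi)=T'(\phi)$ yields $\mathrm{\mathbf{M}}(T)\le\mathrm{\mathbf{M}}(T')$.

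\emph{Step 3: the equality case.} Suppose equality holds. Then $\int_X\bigl(1-\phi_x(\overrightarrow{T'}_x)\bigr)\,d\|T'\|=0$ with a nonnegative integrand, so $\phi_x(\overrightarrow{T'}_x)=1$ for $\|T'\|$-a.e.\ $x$; that is, $T'$ is calibrated. Conversely, if $T'$ is calibrated, Step 1 applied to $T'$ gives $\mathrm{\mathbf{M}}(T')=T'(\phi)=T(\phi)=\mathrm{\mathbf{M}}(T)$.

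The main point needing care is in Step 2: the pointwise inequality $\phi_x(\overrightarrow{T'}_x)\le 1$ must hold for possibly non-simple tangent $m$-vectors. This is exactly why the mass is defined via the norm dual to the comass, so the inequality is immediate from that duality. The other delicate point is that the homology must be realized by a compactly supported $S$, so that $S(d\phi)$ makes sense for a globally defined $\phi$; this is precisely the hypothesis as stated.
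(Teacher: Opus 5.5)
Your proof is correct and is the standard Harvey--Lawson argument, $\mathrm{\mathbf{M}}(T)=T(\phi)=T'(\phi)\le\mathrm{\mathbf{M}}(T')$, with equality forcing $\phi_x(\overrightarrow{T'}_x)=1$ for $\|T'\|$-a.e.\ $x$. The paper quotes this theorem from Harvey--Lawson without reproving it, so your argument matches the intended proof.
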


                               Rectifiable $m$-currents  $\mathscr R_m(X)\subset \mathbb M_m(X)$
                           are
                             the closure of the integral Lipschitz $m$-chains in the $\bold M$-topology. 
                            Then $I_m(X):=\big\{T\in \mathscr R_m(X): dT\in \mathscr R_{m-1}(X)\big\}$
                               is the set of {integral} $m$-currents.
                               \cite{FF} also establishes 
                               \begin{equation}\label{isoz}
                               H_*(I_*(X))\cong H_*(X;\mathbb Z).
                               \end{equation}
                               
                                 Thus, a calibrated integral current
                                minimizes the mass not only in the homology class of integral currents but also that for normal currents.
                               If one focuses on its minimality among integral currents then it is called (homologically) area-minimizing.
                               
                               For more applications, certain singularity is allowed in practice.
                                 \begin{defn}\label{coflat}
           Let $\phi$ be a calibration of degree $m$ on $X-S_\phi$,
           where $S_\phi$ is a closed subset of $X$ of Hausdorff $m$-measure zero.
           Then $\phi$ is called a {coflat calibration} on $X$.
           We say $\phi$ {calibrates} a current, if it is calibrated by $\phi$ on $X-S_\phi$.
           Similarly, in such a case we say the current {supports} the coflat calibration $\phi$.
          \end{defn}
\begin{rem}
There is a corresponding coflat version of Theorem \ref{hl} with the same kind of conclusion.
\end{rem}
\begin{rem}
Via mollifications certain continuous and even non-continuous forms (cf. \cite{Law}) can serve as generalized calibrations.
\end{rem}

          In this paper we shall also make some results involving calibrated normal currents.
          So let us recall some related things here from II.7 \cite{HL}.
          Let $K$ denote the convex hull of $G(m, \R^{N+1})$ in $\wedge^m \R^{N+1}$
          where $G(m, \R^{N+1})$ stands for the oriented Grassmannian of simple $m$-vectors of unit size in $\R^{N+1}$.
          Then the unit ball under the mass norm is exactly $K$.
          
          Let $\phi\in\wedge^m (\R^{N+1})^*$ of comass one.
          Its dual facet is $\mathfrak F^*(\phi)=\{\xi\in \p K\,:\, \phi(\xi)=1\}$.
          Set $\mathscr G(\phi)=\{\xi\in G(m, \R^{N+1})\,:\, \phi(\xi)=\|\phi\|^*\}$.
          The the following tells us how $\overrightarrow T$ could be for a calibrated normal current $T$.
           
           \begin{lem}[Lemma 7.1 of \cite{HL}]\label{facet}
           Suppose $\phi\in\wedge^m (\R^{N+1})^*$ has comass one.
           Then
           $\mathfrak F^*(\phi)$ is exactly the convex hull of $\mathscr G(\phi)$.
           \end{lem}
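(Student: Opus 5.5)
The plan is to treat the statement as a fact about exposed faces of a compact convex set in the finite-dimensional space $\wedge^m\R^{N+1}$, and to prove the two inclusions directly. First I record the basic facts. The oriented Grassmannian $G(m,\R^{N+1})$ is a compact subset of $\wedge^m\R^{N+1}$. By Carath\'eodory's theorem, its convex hull $K$ is compact, and every point of $K$ is a \emph{finite} convex combination $\sum_{i=1}^{k} t_i\xi_i$ with $\xi_i\in G(m,\R^{N+1})$, $t_i>0$, $\sum t_i=1$ and $k\le \dim\wedge^m\R^{N+1}+1$. Since $\phi$ is linear and $\|\phi\|^*=\max_{G(m,\R^{N+1})}\phi=1$ by Definition \ref{comassdef}, we get $\phi\le 1$ on all of $K$. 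Hence the hyperplane $\{\phi=1\}$ supports $K$, and $\mathscr G(\phi)$ is nonempty by compactness.

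\emph{Inclusion $\operatorname{conv}\mathscr G(\phi)\subseteq\mathfrak F^*(\phi)$.} Each $\xi\in\mathscr G(\phi)$ lies in $K$ and satisfies $\phi(\xi)=1$. A point of $K$ with $\phi=1$ cannot be interior to $K$: moving slightly in a direction $v$ with $\phi(v)>0$ would give a point of $K$ with $\phi>1$. So $\{\xi\in K:\phi(\xi)=1\}=K\cap\{\phi=1\}=\mathfrak F^*(\phi)$. This set is convex as the intersection of two convex sets, and it contains $\mathscr G(\phi)$, so it contains the convex hull of $\mathscr G(\phi)$.

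\emph{Inclusion $\mathfrak F^*(\phi)\subseteq\operatorname{conv}\mathscr G(\phi)$.} Take $\xi\in\mathfrak F^*(\phi)$ and write $\xi=\sum t_i\xi_i$ as above. Then
\[
1=\phi(\xi)=\sum_i t_i\,\phi(\xi_i)\le\sum_i t_i=1 .
\]
Since all $t_i>0$ and $\phi(\xi_i)\le 1$, equality forces $\phi(\xi_i)=1$ for every $i$. Thus each $\xi_i\in\mathscr G(\phi)$, and $\xi$ lies in the convex hull of $\mathscr G(\phi)$.

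The only step needing care is the first one: that $K$ is exactly the set of finite convex combinations of unit simple $m$-vectors. This could fail if one instead took a closed convex hull of a noncompact set, but here it follows from compactness of $G(m,\R^{N+1})$ and Carath\'eodory in finite dimensions. Everything else is the elementary observation that a supporting linear functional attains its maximum on a convex combination only if it attains it on every vertex used.
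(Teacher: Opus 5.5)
Your proof is correct. The paper gives no argument of its own here; it simply quotes Lemma 7.1 of Harvey--Lawson. Your route is the standard convexity argument behind that lemma: write $\xi\in\mathfrak F^*(\phi)$ as a finite convex combination $\sum t_i\xi_i$ of unit simple $m$-vectors, and use $\phi\le 1$ on $G(m,\R^{N+1})$ to force $\phi(\xi_i)=1$ for every $i$. You also handle the $\partial K$ in the definition properly. You identify $\mathfrak F^*(\phi)$ with $K\cap\{\phi=1\}$ using compactness of $K$ and $\phi\neq 0$, which gives the reverse inclusion by convexity. Your finite (Carath\'eodory) decomposition is exactly the form used later in the proof of Corollary \ref{corT}.
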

           
              {\ }
              
              \subsection{Minimal products. }
                  Given minimal submanifolds $\{L_i\}$ in $\mathbb S^{N_i}$ respectively for $i=1,\cdots, n$,
                               a nice algorithm to generate new minimal submanifold  in sphere is 
                         \begin{equation}\label{prod}
         L_1\dot \times\cdots \dot\times L_n
                \triangleq
                 \left(\lambda_1 L_1, \cdots, \lambda_n  L_n\right)\subset \mathbb S^{N_1+\cdots +N_n+n-1}.
       \end{equation}
         where $\lambda_i=\sqrt{\frac{k_i}{k}}$ with $k_i$ the positive dimension of $L_i$.
              For various proofs of the minimal product structure,
              we refer to  \cite{X}, \cite{TZ} and \cite{CH}.

                              %
                            When a closed submanifold $L\subset \mathbb S^N$ is not totally geodesic,
                              the cone 
                              $$C(L)=\{tx:x\in L, \, t\in [0,\infty)\}$$ has only one singular point at the origin
                              and it is called a regular cone.
  Inspired by \cite{Law},
the author      studies the asymptotic behavior of regular minimal cone $C(L\times  \cdots \times L)$        for any embedded closed minimal submanifold $L$ in Euclidean sphere
and finally gains Theorem \ref{Z25main}.
Among others, we also get the following neat result.
       
          \begin{thm}[Theorem 1.3  of \cite{Z25}]\label{addcor}
                               Let $L$ be an (oriented) embedded closed  minimal subamanifold in $\mathbb S^n$.
                              Then the cone over minimal product $L \dot\times \mathbb S^{d}$ in $\R ^{n+d+2}$ is area-minimizing when $d$ is sufficiently large.
                               \end{thm}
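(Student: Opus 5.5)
Theorem \ref{addcor} is quoted from \cite{Z25}, so the plan is to reproduce the mechanism behind it, namely Lawlor's curvature criterion \cite{Law}. Write $k=\dim L$ and $M=L\dot\times\mathbb S^d=\bigl(\sqrt{k/(k+d)}\,L,\ \sqrt{d/(k+d)}\,\mathbb S^d\bigr)\subset\mathbb S^{n+d+1}$. This is a minimal submanifold of dimension $m=k+d$, and by the minimal product structure recalled in \S\ref{S2} the cone $C(M)\subset\R^{n+d+2}$ is a regular minimal cone.

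Lawlor's criterion has two ingredients. First, for each $p\in M$ and each unit normal $\nu$ of $M$ in the sphere, one bounds the second fundamental form: $\sup_{\nu}|A^\nu|$ together with the normal radius of $M$. Second, one compares the vanishing angle $\theta(m,\alpha)$, which comes from Lawlor's ODE, with the normal radius. If the vanishing angle is smaller than the normal wedge that embeds, the retraction along normal geodesics produces a (coflat) generalized calibration, and $C(M)$ is area-minimizing.

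The key steps are the following.

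(1) Compute $A$ for the product. Normals split into three types: normals of $L$ in $\mathbb S^n$ (scaled by $1/\lambda_1$, with $\lambda_1=\sqrt{k/(k+d)}$), and the one extra "mixing'' normal direction of the product inside $\mathbb S^{n+d+1}$, which is $\bigl(-\lambda_2 x,\ \lambda_1 y\bigr)$ (the $\mathbb S^d$ factor is totally geodesic in $\mathbb S^{d}$, so it contributes no other normals). Along the first type, $|A^\nu|\le C_L\sqrt{(k+d)/k}$, supported only on the $k$ tangent directions of $L$. Along the mixing normal, the principal curvatures are $\lambda_2/\lambda_1$ on $TL$ and $-\lambda_1/\lambda_2$ on $T\mathbb S^d$; these are $\sqrt{d/k}$ ($k$ times) and $-\sqrt{k/d}$ ($d$ times).

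(2) Feed these into Lawlor's estimate. The relevant quantity is the pointwise bound on $\det(I-tA^\nu)$, expressed through the top eigenvalues. The large eigenvalues, of order $\sqrt d$, occur with fixed multiplicity $k$, while the dimension $m=k+d$ grows. Lawlor's tables and asymptotics (the vanishing angle behaves like roughly $m^{-1/2}$ times a factor depending on normalized curvature) then show that the vanishing angle tends to $0$ like $O(1/\sqrt d)$ times a constant depending only on $L$ and $k$.

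(3) Normal radius. The product's normal injectivity radius is bounded below uniformly in $d$, since it is controlled by the embeddedness radius of $L$ and by the Clifford-type geometry. One checks that the $L$-factor radius scales by $\lambda_1$ only after rescaling, and that the angle measured on the unit sphere stays bounded below. Hence for large $d$ the vanishing angle is below the normal radius, and Lawlor's criterion applies.

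The main obstacle is step (2). The $k$ large principal curvatures of size $\sqrt{d/k}$ must not spoil the decay of the vanishing angle. I would first try Lawlor's refined criterion, which uses $\det(I-tA^\nu)\ge$ an explicit product bound rather than only $\|A\|$. On the mixing normal, the $d$ small eigenvalues contribute a factor like $(1+t\sqrt{k/d})^d\approx e^{t\sqrt{kd}}$. This growth should dominate the $(1-t\sqrt{d/k})^k$ loss for $t$ small, and it is this balance that \cite{Z25} must quantify.
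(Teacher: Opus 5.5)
You have picked the right tool. The paper does not reprove this theorem; it quotes it from \cite{Z25}, where it comes from Lawlor's criterion, and your curvature computation in step (1) is correct. But step (3) is false, and your conclusion rests on it. Step (1) already refutes it. Along the mixing normal $\eta=(-\lambda_2x,\lambda_1y)$ the principal curvature $\lambda_2/\lambda_1=\sqrt{d/k}$ blows up. Indeed, the normal geodesic $\cos t\,(\lambda_1x,\lambda_2y)+\sin t\,\eta=\bigl((\lambda_1\cos t-\lambda_2\sin t)x,\ (\lambda_2\cos t+\lambda_1\sin t)y\bigr)$ collapses the whole $L$-factor to a point at $\tan t=\lambda_1/\lambda_2$. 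So $M$ has focal points at angle $\arctan\sqrt{k/d}$, and its normal radius is at most that, tending to $0$ like $d^{-1/2}$. With the rate $O(d^{-1/2})$ you assert in step (2), the two sides of Lawlor's inequality are of the same order. The mere fact that the vanishing angle tends to $0$ then proves nothing.

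The missing idea is that the vanishing angle decays like $1/d$, strictly faster than the normal radius. Write Lawlor's equation as $r'=r\sqrt{r^{2K}\cos^{2K-2}\theta\,F(\tan\theta)^2-1}$, $r(0)=1$, where $K=k+d+1$ and $F(t)=\inf_\nu\det(I-tA^\nu)$. With $z=r^{-K}$ and $\phi=K\theta$ it becomes $dz/d\phi=-\sqrt{g^2-z^2}$, where $g=\cos^{K-1}\theta\,F(\tan\theta)$. Along any unit normal of $M$, the $k$ principal curvatures on $TL$ are bounded by $\lambda_1^{-1}\sqrt{1+\kappa_L^2}=O(\sqrt d)$, with $\kappa_L$ a bound for those of $L\subset\mathbb S^n$. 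The other $d$ are bounded by $\sqrt{k/d}$. Hence for $\theta\le C/d$ one has $\log F(\tan\theta)=O(d^{-1/2})$ and $g=1+O(d^{-1/2})$. The equation is thus a small perturbation of $dz/d\phi=-\sqrt{1-z^2}$, whose non-constant solution $z=\cos\phi$ vanishes at $\phi=\pi/2$, so $\theta_0=\frac{\pi}{2K}(1+o(1))$.

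Step (3) then only has to supply a scaled lower bound. A point reached by a normal geodesic of length $t$ from $p$ is $p+v$ with $v$ normal to $M$ at $p$ in Euclidean space and $|v|\le t$. So the normal radius is at least the Euclidean reach of $\lambda_1L\times\lambda_2\mathbb S^d$. That reach is at least $\min\{\lambda_1\,\mathrm{reach}(L),\lambda_2\}\ge c(L)\,d^{-1/2}$, which beats $\pi/(2K)$ for large $d$.

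Your heuristic in the main-obstacle paragraph should also be dropped. Since $\mathrm{tr}\,A^\nu=0$, AM--GM gives $\det(I-tA^\nu)\le1$ whenever $t\|A^\nu\|\le1$. Along $\eta$ the expansion is $1-\frac{t^2}{2}(k+d)+O(t^3)$, so the factor $e^{t\sqrt{kd}}$ cancels the loss to first order and never dominates. The curvature is harmless not through a balance, but because it is invisible at the scale $\theta\sim 1/d$ where the retraction curves vanish.
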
        
            Actually, cones in the statements of Theorems \ref{Z25main} and \ref{addcor} are mass-minimizing
            in the sense that each truncated cone within the unit ball is mass-minimizing among all currents sharing the same   boundary.
          A constellation of regular minimizing cones can be constructed
             based on any finite collection of minimal submanifolds in spheres.
           Here and in Theorem \ref{Z25main} there is no additional assumption at all on the embedded closed minimal submanifolds. 
               Any continuous deformation family of  closed minimal submanifolds in a sphere
               can lead to a deformation of corresponding minimizing cones in Euclidean space.

                          {\ }
                          
                                               
                                  \section{Proof of Theorem \ref{T}}\label{S3}         
                           In this section, we will prove Theorem \ref{T}.
                           There are two steps.
                           The first half is the same as that of Theorem \ref{T2} (i.e., Theorem 1.4 of \cite{Z26})
                           which based on a decomposition lemma for calibration forms.
                           
                           \subsection{Decomposition lemma}
                           A poweful canonical decomposition lemma for an $m$-form with respect to certain $m$-plane 
                           in $\R^n$
            is the following.

                             \begin{lem}[Lemma 2.12 in \cite{HL1}] \label{hl1}
             Let $\xi\in \Lambda^m \mathbb{R}^n$ be a simple $m$-vector with
             $V = span\ \xi$. Suppose $\phi\in\Lambda^m\big(\mathbb{R}^n\big)^*$ satisfies  $\phi(\xi)= 1$. 
              Then there exists a unique oriented complementary subspace $W$ to $V$  with the following property.
                      For any basis $v_1, \cdots, v_n$ of $\mathbb{R}^n$ such that $\xi=v_1\wedge... \wedge v_m$
                      and $v_{m+1}, \cdots, v_n$ is basis for $W$, 
                      one has that
                                 \begin{equation}
                                 \phi=v_1^*\wedge \cdots \wedge v_m^*+ \sum a_Iv_I^*,
                                  \end{equation}
                                   where $a_I=0$ whenever $i_{m-1}\leq m$. 
                                   Here $I=\{i_1,\cdots, i_m\}$ with $i_1<\cdots<i_m$.
              \end{lem}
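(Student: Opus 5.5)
The plan is to pin down $W$ as the kernel of a concrete linear map built from $\phi$ and $V$. Once $W$ is fixed, the coefficients $a_I$ can be read off by evaluating $\phi$ on basis $m$-vectors. Recall that for any basis $v_1,\dots,v_n$ with dual basis $v_1^*,\dots,v_n^*$, one has $\phi=\sum_I \phi(v_{i_1}\wedge\cdots\wedge v_{i_m})\, v_I^*$. The condition $i_{m-1}\le m$ means that at least $m-1$ of the indices lie in $\{1,\dots,m\}$, i.e. the index set meets $V$ in at least $m-1$ slots.

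\emph{Case split on how many indices of $I$ lie in $\{1,\dots,m\}$.}
\begin{itemize}
\item If all $m$ indices lie there, then $I=\{1,\dots,m\}$ and the coefficient is $\phi(\xi)=1$, which gives the leading term.
\item If exactly $m-1$ indices lie there, the coefficient is $\phi(v_1\wedge\cdots\wedge\widehat{v_j}\wedge\cdots\wedge v_m\wedge w)$ for some $w\in W$.
\end{itemize}
So the lemma amounts to finding a unique complement $W$ satisfying
\begin{equation*}
\phi(w\wedge\eta)=0\quad\text{for all } w\in W,\ \eta\in\Lambda^{m-1}V .
\end{equation*}
This condition does not depend on the chosen bases of $V$ and $W$.

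\emph{Construction of $W$.} Consider the linear map
\begin{equation*}
\Phi:\mathbb R^n\to (\Lambda^{m-1}V)^*,\qquad \Phi(u)(\eta)=\phi(u\wedge\eta).
\end{equation*}
Define $W:=\ker\Phi$. The key step is that $\Phi|_V$ is an isomorphism.
\begin{itemize}
\item Since $\dim V=m$ and $\phi(\xi)=1$, the restriction of $\phi$ to $\Lambda^mV$ is the volume form of $V$ determined by $\xi$.
\item The pairing $V\times\Lambda^{m-1}V\to\Lambda^mV\cong\mathbb R$, $(v,\eta)\mapsto v\wedge\eta$, is nondegenerate; it is the standard Poincaré-type duality on the exterior algebra of $V$.
\item Since $\dim(\Lambda^{m-1}V)^*=m=\dim V$, the map $\Phi|_V$ is injective and hence bijective.
\end{itemize}
Consequently $\Phi$ is surjective, $\dim W=n-m$, and $W\cap V=\ker(\Phi|_V)=0$. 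Thus $W$ is a complement to $V$ with the required vanishing property.

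\emph{Uniqueness and orientation.} Let $W'$ be any complement with the property. Then $W'\subset\ker\Phi=W$ by definition, and $\dim W'=n-m=\dim W$, so $W'=W$. Orient $W$ so that $\xi\wedge\zeta$ is positively oriented in $\mathbb R^n$, where $\zeta$ is the unit $(n-m)$-vector of $W$; this matches the convention of \cite{HL1}.

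The only non-formal step is the nondegeneracy of $\Phi|_V$, which is exactly where the hypothesis $\phi(\xi)=1$ is used. Everything else is bookkeeping with dual bases.
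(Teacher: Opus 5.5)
Your proof is correct and complete. The paper gives no proof of this lemma; it is quoted verbatim from \cite{HL1}. So there is no in-text argument to compare with, and yours serves as a self-contained justification.

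The reduction is accurate. Apart from the leading coefficient $\phi(\xi)=1$, the coefficients constrained by $i_{m-1}\le m$ are exactly the numbers $\phi(v_1\wedge\cdots\wedge\widehat{v_j}\wedge\cdots\wedge v_m\wedge v_k)$ with $k>m$. Hence the lemma is equivalent to $\phi(W\wedge\Lambda^{m-1}V)=0$.

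Describing $W$ as $\ker\Phi$, i.e.\ the common kernel of the $1$-forms $\eta\lrcorner\phi$ for $\eta\in\Lambda^{m-1}V$, gives existence, uniqueness and basis-independence at once. The hypothesis enters only through the injectivity of $\Phi|_V$, and you verify that correctly; in fact $\phi(\xi)\neq 0$ would suffice, with the normalization fixing only the leading coefficient.

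Your treatment of orientation is also right. The displayed property does not see the orientation of $W$, so the uniqueness is that of the underlying subspace, and the orientation is a normalization.

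Your description also makes the paper's Lemma \ref{L2} a one-line first-variation argument. Suppose $\phi$ has comass one, $v_1,\dots,v_m$ are orthonormal, and $w\perp V$. Then $(v_1+tw)\wedge v_2\wedge\cdots\wedge v_m$ is simple of length $\sqrt{1+t^2|w|^2}$, so $1+t\,\phi(w\wedge v_2\wedge\cdots\wedge v_m)\le 1+\tfrac12 t^2|w|^2$ for all $t$. This forces $\phi(w\wedge v_2\wedge\cdots\wedge v_m)=0$. Perturbing each slot the same way gives $V^\perp\subset\ker\Phi=W$, hence $W=V^\perp$ by counting dimensions.
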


                                     For a calibration form, more geometric information can be read off.
                 
     \begin{lem}[Decomposition for calibration form] \label{L2}
                       Let $v_1, \cdots, v_n$ be an orthonormal basis of $(\mathbb{R}^n, g_E)$ with $\xi=v_1\wedge... \wedge v_m$
                       and $V = span\ \xi$.
                       Assume that $\phi$ is a calibration with constant coefficients which calibrates the oriented $V$.
                       Then $W$ in Lemma \ref{hl1} has to be $\text{span}\{v_{m+1}\, \cdots,  v_n\}$ 
                       and as in Lemma \ref{hl1} we have
                         \begin{equation}\label{decomp}
                                 \phi=v_1^*\wedge \cdots \wedge v_m^*+ \sum a_Iv_I^*,
                                  \end{equation}
                                  where $a_I=0$ whenever $i_{m-1}\leq m$. 
                                   Here $I=\{i_1,\cdots, i_m\}$ with $i_1<\cdots<i_m$.
                                                         \end{lem}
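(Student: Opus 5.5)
The plan is to reduce the lemma to a first-variation computation: show directly that in the orthonormal basis every coefficient $a_I$ with $i_{m-1}\le m$ vanishes, and then invoke the uniqueness clause of Lemma \ref{hl1} to identify $W$.

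\emph{Step 1: read off the coefficients.} Since $v_1,\dots,v_n$ is a basis, $\phi=\sum_I \phi(v_I)\,v_I^*$ with $v_I=v_{i_1}\wedge\cdots\wedge v_{i_m}$. Because $\phi(\xi)=1$, the coefficient of $v_1^*\wedge\cdots\wedge v_m^*$ is $1$. The remaining multi-indices with $i_{m-1}\le m$ are exactly those $I$ with $m-1$ entries in $\{1,\dots,m\}$ and one entry $k>m$. Such an $I$ corresponds to $v_1\wedge\cdots\wedge v_{j-1}\wedge v_k\wedge v_{j+1}\wedge\cdots\wedge v_m$ for some $1\le j\le m$, up to a reordering sign. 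So it suffices to show
$$a:=\phi(v_1\wedge\cdots\wedge v_{j-1}\wedge v_k\wedge v_{j+1}\wedge\cdots\wedge v_m)=0 .$$

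\emph{Step 2: first variation.} For $t\in\mathbb R$, consider the simple $m$-vector
$$\xi_t=v_1\wedge\cdots\wedge(v_j+tv_k)\wedge\cdots\wedge v_m .$$
Since $v_k$ is orthonormal to $v_1,\dots,v_m$, we have $|\xi_t|=\sqrt{1+t^2}$. By linearity, $\phi(\xi_t)=1+ta$. The comass of $\phi$ is one, so
$$f(t):=\frac{1+ta}{\sqrt{1+t^2}}\le 1 \quad\text{for all } t,$$
with equality at $t=0$. Hence $t=0$ is a maximum of the smooth function $f$, and $f'(0)=a=0$. This is the only genuinely geometric input. It is where orthonormality is used: it makes the norm $\sqrt{1+t^2}$ have no linear term. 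In a non-orthonormal basis the same computation would fail.

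\emph{Step 3: identify $W$ by uniqueness.} Set $W'=\mathrm{span}\{v_{m+1},\dots,v_n\}$, oriented by this basis. Step 1 shows that the orthonormal basis realizes the decomposition \eqref{decomp}. To apply the uniqueness in Lemma \ref{hl1}, the property must hold for \emph{every} admissible basis adapted to $\xi$ and $W'$, not just this one. Take any other such basis $v_1',\dots,v_n'$: it is obtained by a linear change inside $V$ of determinant one (preserving $\xi$) together with a linear change inside $W'$. Evaluating $\phi$ on
$$v'_{1}\wedge\cdots\wedge\widehat{v'_j}\wedge\cdots\wedge v'_m\wedge w'$$
with $w'\in W'$ expands into a combination of terms of the same type in the old basis, each of which vanishes by Step 2. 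The coefficient of $v_1'^*\wedge\cdots\wedge v_m'^*$ remains $\phi(\xi)=1$. So $W'$ has the defining property, and uniqueness gives $W=W'$, together with \eqref{decomp}.

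The main, though mild, obstacle is this bookkeeping in Step 3: checking that the vanishing of the "one index outside $V$" coefficients is basis-independent within the adapted class. Step 2 itself is a one-line computation.
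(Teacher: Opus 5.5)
Your proof is correct. Step 2 is the standard first-variation fact that a calibrated plane is a critical point of $\phi$ on the Grassmannian, so $\phi(\eta\wedge w)=0$ for all $\eta\in\Lambda^{m-1}V$ and $w\perp V$. Step 3 correctly checks that this holds in every basis adapted to $\xi$ and $\mathrm{span}\{v_{m+1},\dots,v_n\}$, so the uniqueness clause of Lemma \ref{hl1} applies. The paper states this lemma without proof, carrying it over from the author's earlier work, so there is no proof here to compare against; your first-variation argument is the standard justification for it.
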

                      
{\ }

           \subsection{Modified proof for Theorem \ref{T}. }
           Now we are ready to modify the the proof of Theorem \ref{T2} for Theorem \ref{T}.

        \begin{pfT}
        Suppose 
        $L_i$ is an embedded closed minimal submanifold of positive dimension $k_i$ in $\mathbb S^{N_i}$ for $i=1,\cdots, n$
        and 
        $C(L_1\dot\times\cdots\dot \times L_n)$ is calibrated by a globally defined smooth or continuous calibration $\phi$.
        Then the restriction $\phi_o$  at the origin $o$ calibrates $C(L_1\dot\times\cdots\dot\times L_n)$ in $\R^{N+1}$
        where $N+1=\sum_{i=1}^n (N_i+1)$.
                              Note that $\phi_o$
 is a calibration of constant coefficients with respect to $\{e_I\}$
 where 
 we put orthonormal bases of $\R^{N_i+1}$ together for an orthonormal basis $\{e_1, \cdots, e_{N+1}\}$ 
 of $\R^{N+1}$ and $e_I=e_{i_1}\wedge\cdots\wedge e_{i_m}$
 for $I=i_1\, \cdots \, i_m$ where $1\leq i_1<\cdots<i_m\leq N+1$ and $m=k_1+\cdots +k_n+1$.

Fix 
                               $x_i\in L_i$ 
                               and 
                               let $\xi_i$  be    the oriented unit tangent $k_i$-vector of $T_{x_{i}}L_{i}$    for $i=2, \cdots, n$.
                            For $x$ running in $L_1$,
                          we use
                                $\xi(x)$ to  stand for
                the oriented unit tangent $k_1$-vector of $T_{x}L_{1}$.
                                Let  $\mathcal P_1: \R^{N_1+1}\hookrightarrow \R^{N+1}$ 
                            be the inclusion given by $\mathcal P_1(x)=(x, 0, \cdots, 0)$
                            and similarly define $\mathcal P_i:\R^{N_i+1}\hookrightarrow \R^{N+1}$ for $i=2,\cdots, n$.
                            Denote 
                            $\left(\mathcal P_1\right)_\#\xi(x)$
                            and
                             $\left(\mathcal P_i\right)_{\#} \xi_i$
                             by 
                             $\tilde \xi(x)$
                             and
                                $
                          \tilde \xi_i$
                          for $i\geq 2$ respectively.
                              In \cite{Z26} 
                              we consider 
                             calibration $(k_1+1)$-form 
                             $\Phi_o=\left(\tilde \xi_2\wedge \cdots \wedge \tilde \xi_n\right) \lrcorner \phi_o$ 
                             of constant coefficients.
Note that  $\Phi_o$ or $-\Phi_o$ 
               calibrates $p(x)\wedge \tilde \xi(x)$ at 
               $p(x)=(\la_1x,\la_2x_2,\cdots, \la_nx_n)\in L_1\dot\times \cdots\dot\times L_n$
               for each $x\in L_1$.

Also note that, for each $x\in L_1 $,
                           any orthonormal basis $ \{v_1(x),\cdots, v_{k_1}(x) \}$ of the linear space span $\tilde \xi(x)$
                           can expand  to an orthonormal basis 
                          \begin{equation}\label{basis}
                          \Big\{v_1(x),\cdots, v_{k_1}(x),  \, v_{k_1+1}(x),\, \ v_{k_1+2}(x), \,\cdots, v_N(x)\Big\}
                           \end{equation}
                           of $\R^{N+1}$
                          with $v_{k_1+1}(x)=p(x)$ and 
                           $v_{k_1+2}(x)=p^\perp(x)$  a unit orthogonal vector to $p(x)$ in the plane span$\{p(x), \tilde x= (\mathcal P_1)_\# x\}$.
                           According to  Lemma \ref{L2}
                we have 
                            \begin{equation}\label{deC}
                                 \Phi_o\big|_{p(x)}= \pm v_1^*(x)\wedge \cdots \wedge v_{k_1+1}^*(x)+ \sum a_Iv_I^*(x),
                                  \end{equation}
                                  where $a_I=0$ whenever $i_{k_1}\leq k_1+1$. 
                                  For nonzero $a_I$, the corresponding multi-index
                                  $I$ must have at least two indices  strictly larger than $k_1+1$.
                               Note that among elements of \eqref{basis} only $v_{k_1+1}(x)$ and $v_{k_1+2}(x)$ have nonzero evaluations with  $\tilde x$
                               but the coefficient $a_I$ vanishes for $I=1\, 2\, \cdots\, k_1\, \widehat{\, (k_1+1)\, }\, (k_1+2)$.
                               Therefore, we get  $\left(\mathcal P_1^*(\Phi_o)\right)(\xi(x)\wedge x)=\Phi_o(\tilde \xi(x)\wedge \tilde x)$ being constant in variable $x$.
                
               If we instead focus on 
                $$\Psi=\left(\tilde x_2\wedge \tilde \xi_2\wedge \cdots \wedge \tilde \xi_n\right) \lrcorner \phi_o$$
                which is a $k_1$-form of constant coefficients in $\R^{N+1}$ as now points $\{x_i\in L_i\}_{i=2}^n$ are fixed,
                then $\underline{\Psi}=\mathcal P_1^* \Psi$ is a $k_1$-form of constant coefficients in $\R^{N_1+1}$ 
                which essentially eliminates  all terms not completely in $\R^{N_1+1}$.
                It can be observed based on \eqref{deC} that
                $$
              \underline{\Psi}(\xi(x))
              =
              {\Psi}\big(\tilde \xi(x)\big)
              =
              \pm
              \left(\tilde \xi(x)\wedge \tilde x_2\wedge \tilde \xi_2\wedge \cdots \wedge \tilde \xi_n\right) \lrcorner \phi_o
              =
              \pm \la_2
              \neq 0.
                $$
                Since 
                $
              \underline{\Psi}
              $
              is a closed form
              on the entire   $\R^{N+1}$
              and 
              $\R^{N+1}$
              has trivial homologies,
             it finally leads us      
             to the conclusion 
             $0=\int_{L_1}\underline{\Psi}=\pm \la_2 \text{vol}(L_1)\neq 0$.
             Contradiction!
             Hence we finish the proof.        
        \end{pfT}

\begin{rem}
$L_1$ of positive dimension is null-homologous in $\R^{N_1+1}$ and  it is in particular the boundary of truncated $C(L_1)$ within the unit ball.
\end{rem}
\begin{rem}\label{ic}
The same result holds for integral current inputs $L_1,\cdots, L_n$ 
as one can take $x_i$ to be a regular point of $L_i$ for $i=2,\cdots, n$ in the proof 
and the null-homotopy of $L_1$ still leads us to the contradiction in the last step.
Although our arguments can work well for any integral currents and positive combination of factors in \eqref{prod},
the useful case is essentially  the minimal product taking  stationary integral currents as inputs.
See \S \ref{S5} for more details.
\end{rem}

                         \begin{rem}
                          In particular Theorem \ref{T} completely answers the question of Remark 5.2 in \cite{Z26}.
                         As a result all the minimizing cones in Theorems \ref{Z25main} and \ref{addcor} (not limited to Theorem \ref{T2})
                         can be used to form the obstruction of calibrations in the smooth category in \cite{Z26}.
                         \end{rem}
                 {\ }

                           \section{Situation involving Cartesian product} \label{S4}
                           In this section,
                           we consider the situation involving  Cartesian product.
                           Given two  mass-minimizing currents in Euclidean spaces respectively,
                            it is a long-standing question
                         whether their Cartesian product is mass-minimizing in the product Euclidean space.
                        In general it is still largely open
                        and
                         Theorem 5.2 of \cite{M}
                         answers this question in some special situations:
                           if either of them has dimension or codimension no larger than two, or both of them have dimension simultaneously or codimension simultaneously  equal to three,
                           then their Cartesian product is also mass-minimizing.
                           
                        \cite{Law} provides a sufficient criterion for a regular minimal cone to be mass-minimizing.
                     Those orientable  cones  which satisfy Lawlor's criterion are called Lawlor cones.   
                    Examples of minimizing cones in Theorems \ref{Z25main} and \ref{addcor}  are all  Lawlor cones.
                      Although those minimizing cones in Theorems \ref{Z25main} and \ref{addcor} 
      cannot be calibrated by globally defined continuous calibrations,
      we show in \cite{Z26} that each of them supports a coflat calibration which is smooth and simple away from the origin.
This means the singularity of dual calibrations has to be nonempty.
      Moreover,
                                a nice property of  Lawlor  cones
                                is that the Cartesian product among them is again mass-minimizing.

                              \begin{prop}[Corollary 6.6 of \cite{Z26}]\label{minCartProd}
                                  Let  $C_i\subset \R^{N_i+1}$  be an (oriented) Lawlor cone for $i=1,\cdots, n$.
                                  Then 
                                 the Cartesian 
                                 cone $C_1\times\cdots\times C_n\subset \R^{N_1+1}\oplus\cdots\oplus \R^{N_n+1}$ (not a regular cone) is  mass-minimizing. 
                                  \end{prop}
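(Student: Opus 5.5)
The statement to prove is Proposition \ref{minCartProd}: the Cartesian cone $C_1\times\cdots\times C_n$ is mass-minimizing when each $C_i$ is a Lawlor cone. The plan is to build a global (coflat) calibration on the product from calibrations attached to the factors, and then apply Theorem \ref{hl} in its coflat version.

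By induction on $n$ it suffices to treat $n=2$. One caveat on the induction: $C_1\times C_2$ is not itself a Lawlor cone. So the inductive statement should be about the property actually used, namely that the truncated cone admits a coflat calibration $\phi_i$ on $\R^{N_i+1}$ which calibrates $C_i$.

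\textbf{Step 1: a calibration for each factor.} For a Lawlor cone $C_i$, Lawlor's criterion \cite{Law} supplies an area-nonincreasing retraction of $\R^{N_i+1}$ onto $C_i$ along curved normal leaves. Concretely, $\phi_i$ is the pullback of the volume form of $C_i$ under this retraction. It is closed, of comass at most one, and satisfies $\phi_i(\overrightarrow{C_i})=1$; it is smooth and simple away from the origin (and possibly off a measure-zero cut locus), as stated in \cite{Z26}.

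\textbf{Step 2: the product form.} Define
\[
\phi=\pi_1^*\phi_1\wedge\pi_2^*\phi_2 .
\]
This form is closed, it is smooth off the set $(\{0\}\times\R^{N_2+1})\cup(\R^{N_1+1}\times\{0\})$, and it restricts to the volume form on $C_1\times C_2$.

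The singular set has Hausdorff $(k_1+k_2+2)$-measure zero only when the dimensions of the factors fit. In general it can have large measure, so I would instead argue that it carries no mass of a competitor for which we need it. The fix I would try is to mollify each $\phi_i$, or to note that Lawlor's forms are bounded and continuous in the generalized sense of the remark following Definition \ref{coflat}. Since each $\phi_i$ is bounded, the product form still acts continuously on normal currents.

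\textbf{Step 3: comass of the product.} This is the main obstacle, because comass is not multiplicative in general: a wedge product of two comass-one forms may have comass greater than one. Here, however, each $\phi_i$ is \emph{simple} away from its singular set, being a pulled-back volume form of the form $\alpha_1\wedge\cdots\wedge\alpha_{k_i+1}$ with $|\alpha_1\wedge\cdots\wedge\alpha_{k_i+1}|\le1$. Then $\phi$ is simple, and its comass equals its Euclidean norm, which is bounded by $|\phi_1||\phi_2|\le1$. This uses that wedge products of forms on orthogonal factors are isometric in the Euclidean norm.

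\textbf{Step 4: conclusion.} Apply the coflat version of Theorem \ref{hl} to a truncated cone and a competitor with the same boundary. This gives mass-minimality among normal currents.
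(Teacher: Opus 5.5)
Your strategy is the one the paper uses; compare the sketch for Proposition~\ref{p42}. You take the simple calibrations of the Lawlor cones from \cite{Z26}, wedge them, get comass $\le 1$ from simpleness exactly as in your Step~3, and remove the singularity by mollifying in the ambient Euclidean space. You can drop the induction and wedge all $n$ forms at once. As you phrased it, the inductive hypothesis (``admits a coflat calibration'') is in fact not inherited by $C_1\times C_2$, for the very reason you give in Step~2.

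Where the proposal does not close is the singular set $S=\bigcup_i\{x_i=0\}$. Three pieces fail as written:
\begin{itemize}
\item Step~4 invokes the coflat version of Theorem~\ref{hl}, but its hypothesis $\mathscr H^m(S)=0$, with $m=\sum_i(k_i+1)$, is exactly what you observed fails in general.
\item Your alternative, that a bounded form ``acts continuously on normal currents'', is false. A bounded measurable form is only defined Lebesgue-a.e., and a competitor $T'$ may well put mass on $S$.
\item ``Mollify each $\phi_i$, then use Step~3'' breaks, because the mollified factors are no longer simple.
\end{itemize}

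The repair has three parts.
\begin{itemize}
\item[(i)] $\phi=\phi_1\wedge\cdots\wedge\phi_n$ is closed as a distribution on all of $\R^{N+1}$. It is bounded, it is smooth and closed off $S$ (each $\phi_i$ is smooth off the origin, as \cite{Z26} provides), and $S$ has codimension $\ge 2$. A cutoff near $S$ with gradient $\lesssim 1/\varepsilon$ therefore costs only $o(1)$.
\item[(ii)] Then $\phi_\varepsilon=\phi*\rho_\varepsilon$ is smooth and closed, and has comass $\le 1$ by convexity and translation-invariance of the comass norm on $\R^{N+1}$. With a product mollifier, $\phi_\varepsilon$ is exactly the wedge of the mollified factors. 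So mollifying the factors is fine, but the comass bound must come from convexity applied to the product, not from Step~3.
\item[(iii)] For the truncated cone $T$ and any $T'$ with $\partial T'=\partial T$, you get $\mathbf{M}(T')\ge T'(\phi_\varepsilon)=T(\phi_\varepsilon)\to T(\phi)=\mathbf{M}(T)$ by dominated convergence. This uses that $\phi$ is continuous off $S$ and that $\|T\|(S)=0$, since $\mathrm{spt}\,T\cap\{x_i=0\}$ has dimension $m-k_i-1<m$.
\end{itemize}
The set that must avoid $S$ is the calibrated cone, not the competitor.
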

                                  Actually, we can extend Proposition \ref{minCartProd} a bit.

                                  \begin{prop}\label{p42}
                                  Let   $C_0=C_0(L_0)\subset \R^{N_0+1}$ be an arbitrary mass-minimizing cone (over a normal current $L_0$ in $\mathbb S^{N_0}$)
                                  and
                                  $C_i\subset \R^{N_i+1}$  be an (oriented) Lawlor cone for $i=1,\cdots, n$.
                                  Then 
                                 the Cartesian cone $C_0\times C_1\times\cdots\times C_n\subset \R^{N_1+1}\oplus\cdots\oplus \R^{N_n+1}$ (not a regular cone) is  mass-minimizing. 
                                     \end{prop}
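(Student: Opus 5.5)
We need a plan for proving that $C_0\times C_1\times\cdots\times C_n$ is mass-minimizing, where $C_0$ is an arbitrary mass-minimizing cone and each $C_i$ with $i\geq 1$ is a Lawlor cone.

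The approach is induction on $n$. The main tool is a product argument that combines a Lawlor-type coflat calibration on one factor with a mass-decreasing projection (retraction) argument on the other factor. The key ingredient from \cite{Law} and \cite{Z26} is that each Lawlor cone $C_i$ admits a coflat calibration $\phi_i$ on $\R^{N_i+1}$ that is smooth and simple away from the origin and calibrates $C_i$. Equivalently, there is a projection $\pi_i$ from $\R^{N_i+1}\setminus\{0\}$ onto $C_i$ along the integral leaves (normal "curved rays") of the Lawlor vanishing calibration. Restricted to any $k_i$-dimensional current, this projection decreases $k_i$-dimensional area in the precise sense that $\phi_i$ has comass one and $d\phi_i=0$.

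Since the Cartesian product of two Lawlor cones is again of the same nature, by Proposition \ref{minCartProd} it suffices to treat $n=1$. We set $C=C_1$, with calibration $\phi=\phi_1$ of degree $m_1=\dim C_1$.

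\textbf{Reduction to a slicing inequality.} Take a competitor $T$ in $\R^{N_0+1}\oplus\R^{N_1+1}$ with $dT=d\big((C_0\times C_1)\llcorner B\big)$, where the truncation is in a product-type region. For $\|T\|$-a.e. point, write the tangent $m$-plane, with $m=m_0+m_1$, and use the Federer slicing and coarea machinery as follows:
\begin{enumerate}
\item Apply the form $\mathrm{pr}_1^*\phi$ to slice $T$ by the second factor. By calibration of $C_1$ and Stokes (coflat version, since the singular set $\{0\}\times\R^{N_0+1}$ has small Hausdorff measure), the current $\langle T,\phi\rangle$, an $m_0$-current in $\R^{N_0+1}$ obtained by pushing forward $T\llcorner \mathrm{pr}_1^*\phi$, has boundary equal to $\mathbf M(C_1\llcorner B)$ times $\partial(C_0\llcorner B)$.
\item Minimality of $C_0$ then gives $\mathbf M(\langle T,\phi\rangle)\ge \mathbf M(C_1\llcorner B)\,\mathbf M(C_0\llcorner B)=\mathbf M(C_0\times C_1\llcorner B)$.
\item Conversely, $\mathbf M(\langle T,\phi\rangle)\le \mathbf M(T)$. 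This holds because, pointwise, $(\mathrm{pr}_0)_\#$ applied to the contraction of a unit simple $m$-vector by $\mathrm{pr}_1^*\phi$ has mass at most one, using comass $\|\phi\|^*\le1$ together with the fact that a simple $m$-vector splits as a wedge of a piece in the first factor and a piece in the second factor, up to terms that $\mathrm{pr}_1^*\phi$ kills.
\end{enumerate}
This is the same pattern as the proof of Theorem 5.2 in \cite{M} when one factor is calibrated.

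\textbf{Main obstacle.} The delicate step is item 3, the pointwise mass inequality for the partial contraction $\xi\mapsto(\mathrm{pr}_0)_\#(\xi\lrcorner\mathrm{pr}_1^*\phi)$. This requires showing that the mass norm (not merely the Euclidean norm) of the resulting $m_0$-vector is at most $\|\xi\|\,\|\phi\|^*$. I would first try to prove it for simple unit $\xi$ by choosing an orthonormal basis adapted to the principal angles between $\mathrm{span}\,\xi$ and the factor $\R^{N_1+1}$, writing the contraction explicitly, and estimating by the comass bound via Lemma \ref{facet}. The general case then follows by convexity of $K$. If this fails in full generality, the fallback is to use the special structure of Lawlor calibrations: they are simple away from $0$, $\phi=\omega_1\wedge\cdots\wedge\omega_{m_1}$. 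For simple $\phi$ the contraction is a genuine projection composed with a determinant factor at most one, and the inequality becomes elementary. This fallback is exactly why the hypothesis "Lawlor cone" enters.

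Finally, a truncation and density argument handles the fact that $T$ may pass through the singular set $\{0\}\times\R^{N_0+1}$ of $\mathrm{pr}_1^*\phi$. This set has $\mathcal H^m$-measure zero because $m_1\ge1$, so the coflat remark following Definition \ref{coflat} applies.
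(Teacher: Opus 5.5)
Your route is genuinely different from the paper's, and in outline it works.

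\textbf{Comparison with the paper.} The paper follows Morgan. By Federer's duality, $C_0$ is calibrated by a closed measurable (flat-cochain) form $\phi_0$. Simpleness of the Lawlor calibrations then makes $\phi_0\wedge\phi_1\wedge\cdots\wedge\phi_n$ a measurable calibration of the product, and the paper mollifies this wedge product in the ambient space. You never calibrate $C_0$. Instead you contract the competitor with the simple form of the Lawlor factor, push forward to $\R^{N_0+1}$, and use the minimality of $C_0$ directly.

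Both proofs rest on the same inequality, $\|\psi\wedge\omega\|^*\le\|\psi\|^*\|\omega\|^*$ for simple $\psi$. To prove it, take an orthonormal basis of the plane adapted to the kernel of $\psi$ restricted to that plane. Yours avoids the Hahn--Banach/flat-cochain step. The paper's handles all factors at once, and because it mollifies only after wedging, it never evaluates a singular form on the competitor.

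Drop the ``general case'' of your item 3. For non-simple $\phi$ the needed inequality is tied to Federer's product question (1.8.4 of \cite{F}), which the paper treats as unresolved. Simpleness is therefore the essential input, exactly as in the paper.

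Also, Proposition \ref{minCartProd} does not give the reduction to $n=1$. A product of Lawlor cones is not regular, and that proposition supplies no simple calibration. Simply induct, taking $C_0\times\cdots\times C_{n-1}$ as the new arbitrary minimizing cone.

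\textbf{The step that fails.} Your last paragraph is wrong as written. The singular set of $\mathrm{pr}_1^*\phi$ is $\R^{N_0+1}\times\{0\}$, not $\{0\}\times\R^{N_0+1}$. It has dimension $N_0+1$, so it is $\mathcal H^{m_0+m_1}$-null only when $\operatorname{codim}C_0<m_1$. For example, this fails for $C_0$ an $m_0$-plane of codimension $\ge m_1$.

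So the coflat remark does not apply. A competitor may carry mass on or near this set, and there the boundary formula of your item 1 is unjustified. Mollifying $\phi$ is not a way out, because it destroys the simpleness you need in item 3.

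\textbf{A repair.} Use $\phi^{y_0}=\phi(\cdot-y_0)$ for small generic $y_0$.

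By Fubini, $\int\|T\|(\R^{N_0+1}\times B_\epsilon(y_0))\,dy_0=\mathcal L^{N_1+1}(B_\epsilon)\,\mathbf M(T)$. Since $N_1\ge 1$, for a.e.\ $y_0$ there are $\epsilon_k\to0$ with $\epsilon_k^{-1}\|T\|(\R^{N_0+1}\times B_{\epsilon_k}(y_0))\to0$.

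A cutoff argument then gives $\partial\langle T,\phi^{y_0}\rangle=\pm c(y_0)\,\partial(C_0\llcorner B_0)$ with $c(y_0)=(C_1\llcorner B_1)(\phi^{y_0})$. The piece $(C_0\llcorner B_0)\times\partial(C_1\llcorner B_1)$ of $\partial T$ contributes nothing, for degree reasons.

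Since $\phi$ is smooth near $C_1\setminus\{0\}$, dominated convergence gives $c(y_0)\to\mathbf M(C_1\llcorner B_1)$. Now let $y_0\to0$ in $\mathbf M(T)\ge c(y_0)\,\mathbf M(C_0\llcorner B_0)$. Combined with $\mathbf M\big((C_0\llcorner B_0)\times(C_1\llcorner B_1)\big)=\mathbf M(C_0\llcorner B_0)\,\mathbf M(C_1\llcorner B_1)$ (4.1.8 of \cite{F}, as $\overrightarrow{C_1}$ is simple), this completes your argument.

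Finally, minimality on $B_0\times B_1$ implies minimality on the unit ball, by extending competitors with the cone outside the ball.
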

                                                \begin{pfp42}
                                                Following the thoughts in \cite{M},
                                                for the mass-minimizing cone $C_0$ there exists a closed flat cochain represented by a measurable  calibration form $\phi_0$.
                                                Suppose the simple coflat calibrations for $\{C_i\}$ are denoted by $\phi_i$ for $i=1,\cdots, n$.
                                                Then 
                                                by the simpleness of $\phi_1,\cdots, \phi_n$
                                                the exterior product
                                                $\phi_0\wedge \phi_1\wedge \cdots \wedge \phi_n$ is a measurable calibration form for $C_0\times C_1\times \cdots \times C_n$
                                                and
                                                 one can get a proof  using mollifications within the Euclidean space.
                                                \end{pfp42}
                              \begin{rem}
                              The mass-minimality statement is still true if we replace $C_0$ by any mass-minimizing normal current $T$ in $\R^{N_0+1}$.
                             \end{rem}
                                    
                                    The proof of Theorem \ref{T} implies the following corollary.
                                    
                                    \begin{cor}\label{c0prod}
                                    Assume $C_0=C_0(L_0)$ and $C_i$ are given as in Proposition \ref{p42} with $n\geq 1$.
                                    Suppose the link  of $C_1$ is a minimal product of  positive dimensional minimal submanifolds.
                                    Then the mass-minimizing Cartesian cone $C_0\times C_1\times \cdots \times C_n$ cannot be calibrated by any global defined continuous calibration.
                                    \end{cor}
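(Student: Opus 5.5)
The plan is to reduce to the argument in the proof of Theorem \ref{T} by contracting the extra factors away. Write the link of $C_1$ as $L_1\dot\times\cdots\dot\times L_r$ with $r\geq 2$ and each $L_j\subset\mathbb S^{M_j}$ of positive dimension $k_j$. Suppose a global continuous calibration $\phi$ on $\R^{N_0+1}\oplus\R^{N_1+1}\oplus\cdots\oplus\R^{N_n+1}$ calibrates $C_0\times C_1\times\cdots\times C_n$. At the origin, which lies in the support of the product cone, $\phi_o$ is a constant-coefficient form of comass one. By continuity and conicality, $\phi_o$ calibrates every tangent plane of the product at its regular points. Away from the origin, each factor cone is invariant under dilation, so $\phi_o$ calibrates $\overrightarrow{C_0}(y_0)\wedge \overrightarrow{C_1}(y_1)\wedge\cdots\wedge\overrightarrow{C_n}(y_n)$ for $\|C_0\|$-a.e.\ $y_0$ and regular points $y_i\in C_i$.

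Now fix a point $y_0$ where $\overrightarrow{C_0}(y_0)=\eta_0$ is defined and $\phi_o$ calibrates the product. For $i\geq 2$, fix regular points $y_i\in C_i$ with unit tangent vectors $\eta_i$. Contract to get
$$\phi'=(\eta_0\wedge\eta_2\wedge\cdots\wedge\eta_n)\lrcorner\phi_o,$$
with signs fixed by orientation ordering. This is a constant-coefficient form on $\R^{N+1}$ of comass at most one. It calibrates $\overrightarrow{C_1}(y_1)$ for every regular $y_1\in C_1$, because $\phi'(\zeta)=\phi_o(\eta_0\wedge\zeta\wedge\eta_2\wedge\cdots)$ up to sign, and the comass bound together with equality at the calibrated planes gives comass exactly one. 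Pulling $\phi'$ back to $\R^{N_1+1}$ produces a constant-coefficient calibration calibrating $C_1=C(L_1\dot\times\cdots\dot\times L_r)$. The contraction by $\eta_0$ removes the $C_0$ directions, and we only evaluate on vectors in $\R^{N_1+1}$. If $\eta_0$ is only a measurable field (normal current $L_0$), I would choose $y_0$ to be a point where the calibration equality holds $\|C_0\|$-a.e.; Lemma \ref{facet} lets me pass to simple extreme vectors of the facet if $\eta_0$ is not simple.

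From here I would run the proof of Theorem \ref{T} verbatim with $\Phi_o$ replaced by $(\tilde\xi_2\wedge\cdots\wedge\tilde\xi_r)\lrcorner(\text{pullback of }\phi')$. That argument uses only that the constant form at the origin calibrates $C(L_1\dot\times\cdots\dot\times L_r)$. Lemma \ref{L2} gives the decomposition \eqref{deC}, and $\underline\Psi$ then satisfies $\underline\Psi(\xi(x))=\pm\lambda_2\neq 0$ along $L_1$. Since $\underline\Psi$ has constant coefficients, it is exact, and integrating over the closed $L_1$ gives a contradiction.

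The main obstacle is the first step: justifying that $\phi_o$ calibrates the product tangent planes when $C_0$ is only a normal-current cone. This requires choosing $y_0$ in the full-measure set where $\phi$ calibrates, taking limits along rays by continuity, and handling possibly non-simple $\overrightarrow{C_0}$ via Lemma \ref{facet}. The sign conventions when splitting the contraction are routine.
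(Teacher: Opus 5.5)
Your proposal is correct and is essentially the paper's argument. Both pass to the constant form $\phi_o$, contract by tangent multivectors of every factor other than $C_1$ to get a constant-coefficient calibration of $C_1$, and then rerun the proof of Theorem \ref{T} via Remark \ref{ic}. The only difference is how the $C_0$ direction is handled: the paper first builds $\underline\phi$ calibrating $C_0$ and uses Lemma \ref{facet} to extract a simple $\xi_0\in\mathscr G(\underline\phi)$, whereas you contract directly by $\overrightarrow{C_0}(y_0)$ at a generic $y_0$. Your route is legitimate because $\|\eta_0\wedge\zeta\|_{\mathrm{mass}}\le\|\eta_0\|_{\mathrm{mass}}=1$ for unit simple $\zeta$, which gives the comass bound you asserted without proof.
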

                                          \begin{rem}\label{complicated}
                                        The support of a general normal current can be very complicated.
                                        An example that the author learned from Prof. Blaine Lawson is 
                                       the normal current limit(s) \cite{FF} of $T_k/\mathrm{\mathbf{M}}(T_k)$ in $\C P^2$ 
                                        where $T_k=\{[z_0,\, z_1,\, z_2]\,  :\, z_0^k+z_1^k+z_2^k=0\}$.
                                        Similarly, consider
                                        minimizing cones
                                        $\tilde T_k=\{(z_0,\, z_1,\, z_2)\in \C^3\,  :\, z_0^k+z_1^k+z_2^k=0\}$
                                        and denote its corresponding link in $\mathbb S^5$ by $\tilde L_k$.
                                        Then by the celebrated compactness theorem of \cite{FF} there exists normal current limit(s) of $\tilde L_k/\mathrm{\mathbf{M}}(\tilde L_k)$
                                        in $\mathbb S^5$.
                                        \end{rem}
                                    \begin{proof}
                                    Suppose the Cartesian product is calibrated by $\phi$.
                                    Then it is calibrated by the calibration $\phi_o$ of constant coefficients.
                                    By assumption we can take regular points $x_i\in \text{supp }C_i$ for $i=1,\cdots, n$
                                    and their unit tangential multi-vectors $\check\xi_i$ of $C_i$ at $x_i$.
                                    Then 
                                    the pullback $\underline\phi$ of 
                                    $\left(\check\xi_1\wedge \cdots \wedge \check \xi_n\right) \lrcorner \phi_o$
                                    to
                                    $\R^{N_0+1}$
                                    calibrates $C_0$ (with right orientation).
                                    By Lemma \ref{facet},
                                    at $\|C_0\|$-a.e. $x_0\in \text{supp }C_0$ where $\overrightarrow C_0$ exists,
                                    we have $\overrightarrow C_0\in \mathfrak F^*(\underline\phi)=$ covex hull of $\mathscr G(\underline\phi)$.
                                    Hence take $\xi_0\in \mathscr G(\underline\phi)$
                                    and use $\check \xi_0$ for its natural inclusion (push-forward) in the Cartesian product Euclidean space. 
                                    Now consider the calibration of constant coefficients
                                   $\left(\check\xi_0\wedge \check\xi_2\cdots \wedge \check \xi_n\right) \lrcorner \phi_o$ 
                                   and it calibrates $C_1$ (with right orientation).
                                   This is impossible according to the assumption on the link of $C_1$ and Theorem \ref{T} as well as Remark \ref{ic}.
                               Thus the proof gets accomplished.
                                                                        \end{proof}
                                        
                                  We would like to remark two things.
                                  One is,                                  unlike the cones over products of links in Theorem \ref{T},
                                  the Cartesian product itself does not eliminate the possibility 
                                  of being calibrated by globally defined smooth calibrations.
                                  Besides trivial examples, 
                                  let us consider the Cartesian product of special Lagrangian cones which is again special Lagrangian.
                      The other is, even within the category of regular cones,
                     regular mass-minimizing cones proven by Lawlor's criterion and calibrations of constant coefficients
                     respectively
                     are different sets.
                     On the one hand, as pointed out in \cite{XYZ2},
                     the mass-minimality of the first Lawson-Osserman cone in \cite{LO}
                     (calibrated by the coassociative calibration in $\R^7$, see \cite{HL}) 
                     cannot be gained through Lawlor's criterion.
                     Many more examples of regular special Lagrangian cones
                     which  cannot be verified by the criterion
                     can be constructed by the spiral minimal product structure in \cite{LZ}.
                      Note that 
                      the link of a special Lagrangian cone is a special Legendrian current in the odd-dimensional spheres of complex Euclidean space.
                      There are  two parameters $C_1, C_2$ in the spiral minimal product algorithm.
                      When $C_1=-1$ it can generate new examples of embedded special Legendrian submanifolds based on given ones. 
                      By adjusting the value of $C_2$ one can make the generated special Legendrian submanifold have
                       normal radius (a key  quantity in the criterion of \cite{Law})
                       as small as wishes
                       and hence impossible to apply the criterion.
                       On the other hand, 
                       besides many minimizing hypercones,
                       our results here add more examples of minimizing cones which can be found through the criterion of \cite{Law} or Corollary \ref{c0prod}
                       but cannot by calibrations of constant coefficients.
                      
                           {\ }

                           \section{Stationary integral current version of Theorem \ref{T}}\label{S5}
                          We merely allude to stationary currents  in this paper occasionally
                           up to now,
                           and have not yet explained 
                           the definition and why the minimal product structure can produce a new stationary current based on two given ones in spheres rigorously.
                           So, in the section, we will make some explanations and provide some illustrations helpful for understanding.
                           Recall   that the current links that we consider in this paper are all $d$-closed with no boundary and compactly supported, see \S \ref{cal}.

                         By stationary integral $m$-current we mean integral current which is 
                         stationary in the variational sense by deformation of $C^1$-vector field of ambient space. 
                          Another name of this object is stationary integral varifold or integer multiplicity stationary varifold.
                            Following the definition in \cite{Sim},
                            each such object 
                            associates with
                            a rectifiable set $M$ of local finite Hausdorff measure 
                            and a locally bounded density function $\th: M\rightarrow \mathbb Z_+$ $\mathscr H^m$-a.e.
                            and
                            satisfies
                          \begin{equation}\label{stationary}
                            \frac{d}{dt}\Big|_{t=0} \int_{\Psi_t(M)} \theta \left(\Psi^{-1}_t(x)\right)
                             d\mathscr H^m x
                             =0
                       \end{equation}
                            for all deformation $\Psi_t(x)$  with $\Psi_t'(x)=X$ 
                            where $X$ is any compactly supported $C^1$-vector field.
                            An equivalent version of \eqref{stationary}
                            is
                                  \begin{equation}\label{stationary2}
                          \int_{M} div_M X 
                          \, 
                          d\mu
                             =0
                       \end{equation}
                       where $\mu=\th d\mathscr H^m x$
                       and $div_M X=\sum_{i=1}^m \tau_i\cdot D_{\tau_i} X$ where $\{\tau_1, \cdots, \tau_m\}$ is an orthonormal basis of $T_xM$ 
                       (which exist for $\mathscr H^m$-a.e. in $M$).
                       Note that \eqref{stationary2} is essentially integrated over $M\sim N_0$
                        where 
                        $M\subset \bigcup_{j=0}^{\infty} N_j$ with $\mathscr H^m(N_0)=0$
                        and each $N_j$ for $j>0$
                        an $n$-dimensional embedded $C^1$ submanifold
                        (see e.g. Chapter 3 in \cite{Sim16}).
                        Roughly speaking, 
                        \eqref{stationary2} splits into two parts for normal and tangential components $X^N$ and $X^T$
                        $-$ an interior term on $M\sim N_0$ and   a ``boundary term" along $\bigcup_{j=1}^\infty \p \big(M\bigcap N_j\big)$:
                        being stationary implies that both terms vanish for all $X$
                        and 
                        moreover in such case       
                        one can get each $N_j$ where $j>0$ is a smooth submanifold with vanishing mean curvature field
                        and
                        along 
                       the ``boundary term" 
                        $M\bigcap N_0$ when $\p M=\emptyset$ (or $(M\bigcap N_0)\sim \p M$ in the sense of fixed boundary)
                       distributions given by exterior unit normals are  balanced with $\th$-weight to be stationary $\mathscr H^{m-1}$-a.e.
                       as illustrated in the following
                       (otherwise one would have extra boundary in $\p M$).

     \begin{figure}[h]
		\includegraphics[scale=.8]{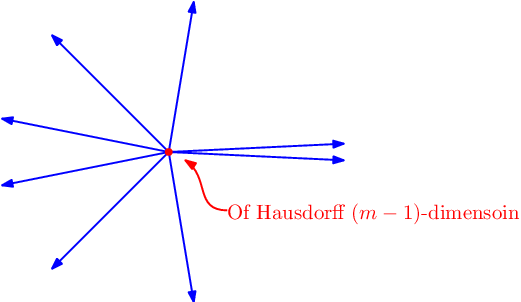}
                             \caption{A local balanced status at a point  with weight $\th\equiv 1$} 
                             \end{figure}

                       \begin{prop}\label{prodcurrent}
                       Given stationary $m_i$-dimensional integral varifolds $T_i$ in $\mathbb S^{N_i}$ for $i=1,2$.
                       Then the minimal product $T=\left(\la_1 T_1\right)\times \left(\la_2T_2\right)$
                       with $\la_i=\sqrt{\frac{m_i}{m_1+m_2}}$ 
                       is a stationary $(m_1+m_2)$-dimensional integral varifold in $\mathbb S^{N_1+N_2+1}$ .
                       \end{prop}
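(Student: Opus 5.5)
We reduce to the first-variation formula \eqref{stationary2} on the cones in Euclidean space. Recall the standard fact: an integral varifold $V$ supported in $\mathbb S^{N}$ is stationary in $\mathbb S^N$ if and only if the cone $C(V)$ over it is stationary in $\R^{N+1}$. Equivalently, $V$ satisfies
$$\int \operatorname{div}_V X\, d\mu_V=-m\int X\cdot x\, d\mu_V$$
for all $C^1$ fields $X$ on $\R^{N+1}$, where $m=\dim V$. This holds because the mean curvature in $\R^{N+1}$ of a sphere-stationary varifold is $-m\,x$. So it suffices to verify this identity for $T=(\la_1T_1)\times(\la_2T_2)\subset \R^{N_1+1}\oplus\R^{N_2+1}$, with $m=m_1+m_2$.

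\textbf{Step 1 (rectifiable structure).} $T$ is supported in $\mathbb S^{N_1+N_2+1}$ because $\la_1^2+\la_2^2=1$. It is an integral varifold: the product of rectifiable sets is rectifiable, and the density is $\th(x,y)=\th_1(x/\la_1)\th_2(y/\la_2)$. Its approximate tangent plane at $\mathscr H^m$-a.e. point is $T_xM_1\oplus T_yM_2$, after identifying the scaled sets. Hence $\mu_T=\la_1^{m_1}\la_2^{m_2}(\delta_{\la_1}\mu_1)\otimes(\delta_{\la_2}\mu_2)$, and for any $C^1$ field $X$,
$$\operatorname{div}_T X=\operatorname{div}_{1}X+\operatorname{div}_{2}X,$$
where $\operatorname{div}_1$ means tangential divergence along the first factor with the second variable frozen, and similarly for $\operatorname{div}_2$.

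\textbf{Step 2 (Fubini and the scaled identities).} Since $X(\cdot,y)$ is a $C^1$ field on $\R^{N_1+1}$ for each fixed $y$, Fubini reduces the first-factor integral to the $\la_1T_1$ slice. The rescaled varifold $\la_1T_1$ lies on the sphere of radius $\la_1$. For it, the identity reads
$$\int\operatorname{div}X\,d\mu=-\frac{m_1}{\la_1^2}\int X\cdot x\,d\mu,$$
where $x$ is the first-factor position. The analogous identity holds for the second factor. Summing gives
$$\int\operatorname{div}_TX\,d\mu_T=-\int\Big(\frac{m_1}{\la_1^2}X\cdot(x,0)+\frac{m_2}{\la_2^2}X\cdot(0,y)\Big)d\mu_T.$$
The choice $\la_i^2=m_i/(m_1+m_2)$ makes both coefficients equal to $m_1+m_2$. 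The right-hand side is therefore $-m\int X\cdot(x,y)\,d\mu_T$, which is exactly the required identity. This is the varifold analogue of the classical smooth computation: the mean curvature vector is $-(m_1/\la_1^2)x-(m_2/\la_2^2)y$, and it is normal to the sphere precisely for these $\la_i$.

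\textbf{Main obstacle.} The delicate point is justifying the splitting of $\operatorname{div}_T$ in Step 1 at $\mathscr H^m$-a.e. point. We must check that the product of the approximate tangent planes is the approximate tangent plane of the product measure. We must also check the integrability needed for Fubini with $C^1$ fields that are not of product form.

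We would handle this via Federer's product theorem for rectifiable sets. Alternatively, we would first prove the identity for $X$ of the form $f(x)g(y)e$ with $e$ a constant vector, where it splits directly by Fubini. We would then pass to general $X$ using the density of finite sums of such fields in $C^1$ on compact sets (Stone–Weierstrass for the derivatives). This is legitimate because both sides of the identity are continuous in the $C^1$ topology, given that $\mu_T$ has finite mass.
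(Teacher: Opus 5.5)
Your route is different from the paper's and correct, except for a sign error that you should fix throughout.

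\textbf{The sign.} The first variation formula is $\int \mathrm{div}_V X\,d\mu_V=-\int H\cdot X\,d\mu_V$. With $H=-m\,x$ this gives $\int \mathrm{div}_V X\,d\mu_V=+m\int X\cdot x\,d\mu_V$. To check, take $X(x)=x$: then $\mathrm{div}_V X=m$, so your version would force $\mathbf{M}(V)=0$. Likewise, the identity for $\lambda_1T_1$ should read $\int \mathrm{div}X\,d\mu=+\frac{m_1}{\lambda_1^2}\int X\cdot x\,d\mu$. You use the same wrong sign in the factor identities and in the target identity, so the argument goes through verbatim once the sign is flipped everywhere. Also, in Step 2 it is the $\R^{N_1+1}$-component of $X(\cdot,y)$ that you feed into the factor identity.

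\textbf{Comparison with the paper.} The paper works intrinsically on $\mathbb S^{N_1+N_2+1}$. Near the product it splits a tangent field as $X=X_1+X_2+f\eta_0$, where $X_i$ is tangent to the rescaled factor spheres and $\eta_0=(-\lambda_2x_1,\lambda_1x_2)$ on the product. The $X_i$-terms vanish by Fubini, by stationarity of $T_i$ in $\mathbb S^{N_i}$, and by $\mathrm{div}_{\lambda_2M_2}X_1=0$. The $\eta$-term equals $\left(-\frac{\lambda_2}{\lambda_1}m_1+\frac{\lambda_1}{\lambda_2}m_2\right)\int f$, which is zero for $\lambda_i^2=m_i/(m_1+m_2)$.

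You instead encode sphere-stationarity as the Euclidean identity with generalized mean curvature $-m\,x$. That identity is linear in $X$, so it splits over the two factors by Fubini with no decomposition of $X$. The balancing condition then appears transparently as equality of the coefficients $m_i/\lambda_i^2$, i.e.\ as radiality of the product's mean curvature. This avoids the paper's term-by-term divergence computations, including the implicit step that $\nabla f\cdot\eta_0$ drops out because $\eta_0$ is normal to the product. The price is that you must first justify the Euclidean characterization, which uses that approximate tangent planes of a varifold supported in the sphere lie in $T\mathbb S^N$ a.e.

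\textbf{The shared measure-theoretic input.} Both proofs need that the weight and approximate tangent plane of the product are the products of those of the factors; the paper cites 4.1.8 of Federer for this. Your Stone--Weierstrass fallback does not bypass this input, since computing $\mathrm{div}_T$ even for fields of the form $f(x)g(y)e$ requires knowing the tangent plane of the product. Fubini itself is not an obstacle: the integrand is bounded and the measures are finite.
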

                       
                       \begin{proof}
                       First of all, the product makes sense \cite{F}.
                  %
                       Note that orthonormal bases
                       $\{\tau_1, \cdots, \tau_{m_1}\}$
                       and 
                       $\{\sigma_1, \cdots, \sigma_{m_2}\}$
                       of $x_i\in T_i$ respectively
                       form an orthonormal basis of $T$ at $(\la_1 x_1, \la_2 x_2)$
                       $\mathscr H^{m_1+m_2}$-a.e. in $T$.
                       Let $X$ be a $C^1$-vector field in $\mathbb S^{N_1+N_2+1}$.
                       Then 
                       away from $\mathcal P_1(\mathbb S^{N_1})$
                       and $\mathcal P_2(\mathbb S^{N_2})$
                       the tangential vector field 
                       $X$ decomposed into three parts according to $(y_1, y_2)\in \mathbb S^{N_1+N_2+1}$,
                       i.e., $X=X_1+X_2+\eta$
                       where 
                       $$X_i\in (\mathcal P_i)_\#T_{\frac{y_i}{\|y_i\|}}\mathbb S^{N_i}
                       \text{\ \ \ \ 
                       \
                       and
                       \
                        \ \ \ \ }
                       \eta \parallelsum \eta_0
                       $$
                       with $\eta_0=(-\|y_2\| \frac{y_1}{\|y_1\|}, \|y_1\| \frac{y_2}{\|y_2\|})$.
                       Correspondingly, 
                       in the $\la_1M_1\times \la_2M_2$
                       we have
                       $$\eta=f(\la_1x_1, \la_2x_2)
                     \cdot  \eta_0
                       =
                       f
                       \cdot (-\la_2 x_1, \la_1 x_2), 
                       \text{\ \ \ \ where   }
                       f 
                       \text{\ is a }
                       C^1
                       \text{\  real function}.
                       $$
                       To be simple
                       we use
                    $dy_1$ for $d \mathscr H^{m_1}y_1$ 
                    and similarly for other terms accordingly.
                       As $\overrightarrow T_i$ is simple $\|T_i\|$-a.e. for integral current $T_i$ with $i= 1$ and $2$,
                    by 4.1.8 of \cite{F}
                       we have a.e. that
\begin{equation}\label{Cprod}
                       \overrightarrow {T_1\times T_2}
                       =
                         \overrightarrow T_1
                         \wedge
                           \overrightarrow T_2
                         \
                         \text{
                           \ \ \ and
                           \ \ \
                           }
                           \|T_1\times T_2\|=\|T_1\|\times \|T_2\|.
     \end{equation}
                       Let $M_i$ be the rectifiable set associated to $T_i$ with weight $\th_i$ for $i=1$ and $2$.
                       We get
                    \begin{eqnarray} 
&&  \int_{\la_1M_1\times \la_2M_2} div_{\la_1M_1\times \la_2M_2} X \, \, \, \th_1  dy_1\,  \th_2 dy_2
    \label{var}
\\
 &=&
            \int_{\la_1M_1\times \la_2M_2} div_{\la_1M_1\times \la_2M_2} \left(X_1+X_2+\eta\right)
             \, \, \, \th_1  dy_1\,  \th_2 dy_2
       \nonumber  
       \\
        &=&
            \int_{\la_2M_2} \int_{M_1} \frac{1}{\la_1}div_{M_1} X_1 \, \, \, \th_1  \frac{dx_1}{\la_1^{m_1}}\,  \th_2 dy_2
            +
              \int_{\la_1M_1} \int_{M_2} \frac{1}{\la_2}div_{M_2} X_2  \, \, \, \th_2  \frac{dx_2}{\la_2^{m_2}}\,  \th_1 dy_1
                     \nonumber  
       \\
        && \ \ \ \ \ \ \ \ \ \ \ \qquad \ \ \ \ \ \ \ \ \ \ \ \ \ \ \ \ \ \ \ \ \ \ \  \ \ \ \ \ \
            +\int_{M_1\times M_2} \left(-\frac{\la_2}{\la_1} m_1+\frac{\la_1}{\la_2} m_2\right) f 
            \, \, \, \th_1  \frac{dx_1}{\la_1^{m_1}}\,  \th_2 \frac{dx_2}{\la_2^{m_2}}
                  \nonumber  
                    \\
        &=&
        0\,\, .
         \nonumber  
\end{eqnarray} 
Here we use $div_{\la_1M_1}X_2
                              =
                                           \sum_{i=1}^{m_1} \tau_i\cdot D_{\tau_i} X_2
                                           =-
                                           \sum_{i=1}^{m_1}  D_{\tau_i} \tau_i \cdot X_2=0$
                                           and
similarly for div$_{\la_2M_2}X_1$.
Thus we finish the proof.
                       \end{proof}
                       
                       Examples of stationary currents in spheres are many.
                       For instance,
                       let $$\{e_1, \, f_1,    \, e_2, \, f_2,        \, e_3, \, f_3\}
                       \text{ be the basis of }\C^3=\C\oplus\C\oplus\C
                       $$  
                       with each $\C=$span$\{e_j,\, f_j\}$ and $ie_j=f_j$ for $j=1,2,3$.
                       Then the $3$-vectors
                       $$
                       e_1\wedge e_2\wedge e_3,
                       \, \ \ \
                       -f_1\wedge f_2\wedge e_3,
                         \, \ \ \
                    -e_1\wedge f_2\wedge f_3,
                       $$
                       span three special Lagrangian $3$-planes of special Lagrangian angle/phase zero.
                       Hence their union gives a special Lagranian current 
                       with link $L$ being union of three $2$-spheres intersecting in pairs exactly in three points in $\mathbb S^5$. 
                       Given any special Legendrian integral current  in an odd-dimensional sphere,
                       one can follow \cite{LZ} to have spiral minimal product current
                       of it with the $L$ here.
                       \footnote{
                       A rigorous proof of this can be done similarly as that in the proof of Proposition \ref{prodcurrent}.
                       }
                       In this way, many stationary currents in unit spheres with mild/trackable singularity can be constructed.
          
          {\ }
          
          Now we  remark that Proposition \ref{prodcurrent} also holds for stationary normal currents $T_1$ and $T_2$.
          First, note that the Cartesian product is well defined for {\it de Rham} currents.
          Although \eqref{Cprod} is generally unknown for normal currents,
          the meaning of being stationary is formally the same.
      Let $T$ be an $m$-dimensional normal current in ambient Riemannian manifold $(Z,g)$. 
      Then by \S \ref{cal} the normal current $T=\overrightarrow T\cdot \|T\|$  a.e. for $\|T\|$ or $\mathscr H^m$
      where
      $\overrightarrow T(p)=\sum c_j(p) P_j(p)\in \Lambda^m T_p Z$ of the unit mass under metric $g_p$.
      Note that, as $\Lambda^m T_p Z$ has finite dimension, 
      each 
      $\overrightarrow T(p)$
      can be written as a sum of finite terms.
       According to 4.1.7 of \cite{F} the condition for $T$ to be stationary is similar to \eqref{stationary2}:
          $$
          \int_{\text{supp} T} \sum c_j(p) \cdot div_{P_j(p)} X \, \, d\|T\| \, p=0
          $$
          for every compactly supported $C^1$-vector field $X$ on $Z$.
          Regardless of \eqref{Cprod},
           by the definition of Cartesian product
                     (cf. 4.1.8 of \cite{F})
           we always have
           $T_1\wedge T_2
           = 
            \overrightarrow T_1
                         \wedge
                           \overrightarrow T_2
                           \cdot
                           \|T_1\|\times \|T_2\|
                           $.
                           Hence,
                           the argument for \eqref{var} implies that the Cartesian product of  normal currents $\la_1T_1$ and $\la_2T_2$ is stationary in the sphere.
          
          {\ }
          
          Theorem \ref{T} is the broadest version in format.
         Focusing on what kind of inputs the theorem can take
         we derive the following.

                       \begin{cor}\label{corT}
                       Let  $T_0$  be a (stationary) normal current of positive dimension in $\mathbb S^{N_0}$
                                and $T_i$  a (stationary)  integral current of positive dimension 
                                    in $\mathbb S^{N_i}$ for $i=1,\cdots, n$.
                                     Then the (stationary) cone $C(T)$ where $T=T_0\dot\times T_1\dot\times\cdots\dot\times T_n$ 
                                     supports no globally defined continuous calibration.
                                     \end{cor}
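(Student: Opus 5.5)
Assume, for contradiction, that $C(T)$ is calibrated by a globally defined continuous calibration $\phi$. As in the proof of Theorem \ref{T}, evaluating at the origin gives a constant-coefficient calibration $\phi_o$ on $\R^{N+1}=\bigoplus_{i=0}^n\R^{N_i+1}$ that calibrates $C(T)$. Write $\mathcal P_i$ for the inclusions of the factors. Since $T=(\la_0T_0)\times\cdots\times(\la_nT_n)$ (Cartesian product of currents), its orienting multivector is $\|T\|$-a.e.
$$\overrightarrow T=\tilde{\overrightarrow T}_0\wedge\tilde{\overrightarrow T}_1\wedge\cdots\wedge\tilde{\overrightarrow T}_n,$$
as noted before the statement via 4.1.8 of \cite{F}. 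The cone's tangent at $p$ is $p\wedge\overrightarrow T$.

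The plan has two steps. First, replace the possibly non-simple factor $T_0$ by a single simple plane, exactly as in the proof of Corollary \ref{c0prod}. Second, apply the argument of Theorem \ref{T} with $T_1$ playing the role of $L_1$.

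\emph{First step.} For $i=2,\dots,n$ fix regular points $x_i$ of the integral currents $T_i$, with unit simple tangent vectors $\xi_i$. These exist by rectifiability, as in Remark \ref{ic}. Because calibrated sets are of full measure, $\phi_o$ restricted to the product structure still gives equality a.e. Choose $x_0\in\mathrm{supp}\,T_0$ generic with respect to the product measure, so that for $\|T_1\|$-a.e. $x\in T_1$ the point $p=(\la_0x_0,\la_1x,\la_2x_2,\dots)$ satisfies
$$\phi_o\big(p\wedge\overrightarrow T_0(x_0)\wedge\tilde\xi(x)\wedge\tilde\xi_2\wedge\cdots\wedge\tilde\xi_n\big)=1.$$
This uses Fubini, and it is the delicate point, since our $x_i$ must also be chosen generically rather than arbitrarily.

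Now $\overrightarrow T_0(x_0)$ lies on the face $\mathfrak F^*$ of the form $\big(p\wedge\cdots\big)\lrcorner\phi_o$, restricted appropriately. By Lemma \ref{facet} it is a convex combination of calibrated simple $k_0$-vectors. Comass one then forces each of these simple vectors $\eta$ to be calibrated as well. The remaining issue is to choose one $\eta$ that works uniformly in $x$. I would handle this by working instead with the form
$$\Theta=\big(\tilde\xi_2\wedge\cdots\wedge\tilde\xi_n\big)\lrcorner\phi_o$$
and considering the plane field at the points $p$. I expect this uniformity (the dependence of $p$ on $x$) to be the main obstacle. The fallback is to note that $\overrightarrow T_0(x_0)$ itself may be used throughout without passing to a simple vector.

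\emph{Second step.} The proof of Theorem \ref{T} in fact only needs a fixed $k_0$-vector $\zeta$ such that $\tilde p\wedge\zeta\wedge\tilde\xi(x)\wedge\cdots$ is calibrated. The decomposition step only requires Lemma \ref{hl1} and Lemma \ref{L2} applied to the plane $p\wedge\tilde\eta\wedge\tilde\xi(x)\wedge\cdots$ for a simple calibrated $\eta$ from the face. I can always pick such an $\eta$, and via compactness of $\mathscr G$ and continuity of the constraint, choose it independently of $x$ up to a null set.

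Set
$$\Psi=\big(\tilde\eta\wedge\tilde x_0\wedge\tilde\xi_2\wedge\cdots\wedge\tilde\xi_n\big)\lrcorner\phi_o,$$
or equivalently contract with $\tilde x_2$ as in Theorem \ref{T}. Pull it back to $\R^{N_1+1}$ to get a closed constant $k_1$-form $\underline\Psi$. Lemma \ref{L2} with $v_{k+1}=p$ and $v_{k+2}=p^\perp$ in the plane of $p$ and $\tilde x_0$ shows that $\underline\Psi(\overrightarrow T_1(x))=\pm\la_0\neq0$, with constant sign. Hence
$$T_1(\underline\Psi)=\pm\la_0\mathbf M(T_1)\neq0.$$
But $T_1$ is a cycle in $\R^{N_1+1}$, hence a boundary (the truncated cone), so $T_1(\underline\Psi)=0$ since $\underline\Psi$ is exact. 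This is a contradiction.
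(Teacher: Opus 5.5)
Your argument is correct, but it follows a genuinely different route from the paper's. The paper keeps the normal current $T_0$ as the moving factor and freezes all the integral factors $T_1,\dots,T_n$. The calibrated vector $p(x)\wedge\tilde{\overrightarrow T}_0(x)$ is then not simple, so Lemma \ref{L2} cannot be applied to it directly. The paper therefore uses Lemma \ref{facet} at each $x$ to write $\overrightarrow T_0(x)=\sum c_j\zeta_j$, with each $p(x)\wedge\tilde\zeta_j$ calibrated by the fixed form $(\tilde\xi_1\wedge\cdots\wedge\tilde\xi_n)\lrcorner\phi_o$. It applies Lemma \ref{L2} to each simple piece to get $\mathcal P_0^*\tilde\Psi(\zeta_j)=\pm\la_1$, recombines by linearity, and integrates over $T_0$. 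There the decomposition is used pointwise and the contraction form depends on neither $x$ nor $j$, so no uniformity question arises.

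You instead move the integral factor $T_1$, so the moving plane $p(x)\wedge\tilde\xi(x)$ is simple. You freeze the non-simple $\overrightarrow T_0(x_0)$ inside the contraction and integrate over $T_1$. You are also right that the frozen points must be chosen generically via Fubini on the product measure $\|T\|$; the paper passes over this with ``regular point''.

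The ``main obstacle'' you flag is not real, and your proposed justification should be dropped. Compactness of $\mathscr G$ and ``continuity of the constraint'' do not help, since $x\mapsto\tilde\xi(x)$ is only measurable. The fix is as follows. Fix once and for all a decomposition $\overrightarrow T_0(x_0)=\sum c_j\zeta_j$ into unit simple vectors with $c_j>0$ and $\sum c_j=1$; this exists because $\overrightarrow T_0(x_0)$ has mass one. For every $x$ in your full-measure set, the $k_0$-form $\zeta\mapsto\phi_o\big(p(x)\wedge\tilde\zeta\wedge\tilde\xi(x)\wedge\tilde\xi_2\wedge\cdots\wedge\tilde\xi_n\big)$ has comass at most one and equals $1$ on $\overrightarrow T_0(x_0)$. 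Hence it equals $1$ on every $\zeta_j$, so $\eta=\zeta_1$ works for all such $x$ at once.

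Your fallback is also valid, and it removes Lemma \ref{facet} entirely. Consider the $(k_1+1)$-form $\big(\tilde{\overrightarrow T}_0(x_0)\wedge\tilde\xi_2\wedge\cdots\wedge\tilde\xi_n\big)\lrcorner\phi_o$.
\begin{itemize}
\item It has comass at most one. For unit simple $\alpha$, the vector $\alpha\wedge\tilde{\overrightarrow T}_0(x_0)\wedge\tilde\xi_2\wedge\cdots\wedge\tilde\xi_n$ is a convex combination of simple vectors of norm at most one.
\item It calibrates the simple vector $\pm p(x)\wedge\tilde\xi(x)$, with a sign independent of $x$. This is all Lemma \ref{L2} needs.
\item Lemma \ref{L2} then gives value $\pm\langle w,p(x)\rangle$ on $\tilde\xi(x)\wedge w$, and taking $w=\tilde x_0$ yields your $\pm\la_0$.
\end{itemize}
So your route replaces the pointwise facet decomposition with one comass estimate for contraction against a non-simple vector. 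The paper's route only ever contracts $\phi_o$ with simple vectors.
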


                 \begin{proof}
                 Suppose it can be calibrated by globally defined continuous calibration $\phi$.
                 Then the cone is calibrated by the calibration $\phi_o$ of constant coefficients.

                 Choose $x_i$ to be a regular point of $T_i$ for $i=1,\cdots, n$.
                 Then denote the corresponding unit tangential multi-vectors by $\xi_i$
                 and the push-forward counterparts by $\tilde \xi_i$ respectively. 
                 Set $p(x)=(\la_0x, \la_1x_1,\cdots, \la_nx_n)$ for $x$ running in the support of $T_0$.
                  It is clear by Definition \ref{calibratable}
                  that the normal current $C(T)$ is calibrated by $\phi_o$
                implies that
                  $\overrightarrow T$ is calibrated by $p(x) \lrcorner\phi_o$ a.e. under $\|T_0\|$.
                      The reason is 
                                 $\overrightarrow T=\overrightarrow T_0 \wedge \tilde \xi_1 \wedge \cdots \wedge \tilde \xi_n$
                 (cf. 1.8.4 of \cite{F})
                 as every $\tilde \xi_i$ is simple for $1\leq i\leq n$.
                 Although $\overrightarrow T_0$ may not be a simple vector,
                 one can $\|T_0\|$-a.e.  have  
                it belong to the convex hull of
                $\mathscr G=\mathscr G(\varphi)$ (see \S \ref{cal} and  reverse the orientation of $C(T)$  if necessary)
                where
                $$
               \varphi=\left(\mathcal P_0^* \Big(\big(p(x)\wedge\tilde \xi_1\wedge\cdots \wedge \tilde \xi_n\big) \lrcorner \phi_o \Big)\right).
                $$
              Here
                  $\mathcal P_0$ can be defined similarly as in the proof of Theorem \ref{T}.
               By Lemma \ref{facet}, we have the decomposition
      \begin{equation}\label{decT0}
                \overrightarrow T_0=\sum_{j=1}^\ell c_j \zeta_j
           \end{equation}
                 where $\ell$ is a finite integer, each $c_j>0$ with $\sum_{j=1}^\ell c_j =1$ and $\zeta_j\in \mathscr G$.
                
                As $\varphi(\zeta_j)=1$,
                it can be seen that
                $ \Big(\big(\tilde \xi_1\wedge\cdots \wedge \tilde \xi_n\big) \lrcorner \phi_o  \Big)(p(x)\wedge \tilde \zeta_j)$ equals $+1$ or $-1$ uniformly for all $j$
                where $\tilde \zeta_j=\big(\mathcal P_0\big)_\# \zeta_j$.
          Now for each simple multi-vector $p(x)\wedge \tilde\zeta_j$
          we can apply Lemma \ref{L2} and repeat the argument in the proof of Theorem \ref{T}
          to show that, for $\tilde \Psi=\left(\tilde x_1\wedge \tilde \xi_1\wedge \cdots \wedge \tilde \xi_n\right) \lrcorner \phi_o$,
          we have
          $$
         \big( \mathcal P_0^*\tilde \Psi \big)(\zeta_j)= +\la_1 \text{ or  } -\la_1 
          $$
          uniformly for all $\zeta_j$.
          As a result, it follows that
           $
               \big( \mathcal P_0^*\tilde \Psi \big)(\overrightarrow T_0)
               =\pm\la_1$
           according to \eqref{decT0}.
           Therefore,
           similarly
           we  end up with the contradiction that 
                                    $$0=T_0\big(\mathcal P_0^*\tilde \Psi\big) = \pm\la_1  \mathrm{\mathbf{M}}(T_0)\neq 0$$
                                    and 
                                    thus finish the proof.
                 \end{proof}

            \begin{rem}
            If \eqref{Cprod} holds (i.e., equivalently the famous Federer's product question on mass in 1.8.4 of \cite{F} can be answered affirmatively),
            then
           we can further extend Corollary \ref{corT}
            by allowing these $T_i$ taken from (stationary) normal currents. 
            \end{rem}
           
                    {\ }

      {\ }

{\ }

\end{document}